\theoremstyle{definition}
\newtheorem{definition}{Definition}
\newtheorem{Remark}[definition]{Remark}
\newtheorem{Example}[definition]{Example}
\newtheorem*{Fact}{Fact}
\theoremstyle{plain}
\newtheorem{theorem}[definition]{Theorem} 
\newtheorem{theoremintro}{Theorem}
\newtheorem{lemma}[definition]{Lemma}
\newtheorem{Corollary}[definition]{Corollary}
\newcommand{\soc}{\operatorname{soc}}
\newcommand{\Z}{\mathbb{Z}}
\newcommand{\Ann}{\operatorname{Ann}}
\newcommand{\N}{\mathbb{N}}
\newcommand{\Aut}{\operatorname{Aut}}
\newcommand{\F}{\mathbb{F}}
\newcommand{\ord}{\operatorname{ord}}
\newcommand{\Syl}{\operatorname{Syl}}
\newcommand{\Cl}{\operatorname{Cl}}
\newcommand{\SL}{\operatorname{SL}}
\newcommand{\AGL}{\operatorname{AGL}}
\newcommand{\DCLab}{\mathcal{C}}
\newcommand{\SIDCLab}{\overline{\mathcal{C}}^+}
\numberwithin{definition}{section}
\numberwithin{equation}{section}
\title{Classifying group algebras in which the socle of the center is an ideal\\[0.2cm]}
\author{\textbf{Sofia Brenner}\footnote{Present address: Department of Mathematics, TU Darmstadt, Germany. E-mail address: \texttt{sofia.brenner@tu-darmstadt.de}} \\
	\normalsize\emph{Institute for Mathematics, Friedrich Schiller University Jena, Germany}\\
	\texttt{sofia.bettina.brenner@uni-jena.de}}
\date{\vspace{-0.5cm}}
\begin{document}
\maketitle
\begin{abstract}
\noindent
Let $F$ be a field. We study the structure of finite groups $G$ for which the socle of the center of the group algebra $FG$ is an ideal in $FG$. In \cite{BRE222}, we studied which general implications this ring-theoretic property of~$FG$ has on the structure of $G$. In the present article, we now focus on a detailed structural analysis of the groups with this property. We showcase its strength by deriving a complete classification for a prototype class of groups. 
%
%

\end{abstract}

\section{Introduction}

Let $F$ be a field and let $G$ be a finite group. A natural question to study is which aspects of the structure of~$G$ are determined by (ring-theoretic) properties of its group algebra $FG$. Featuring as Problem 16 in Brauer's famous list of problems~\cite{BRA63}, this question has seen extensive research. This is motivated by the fact that group algebras are the central objects studied in the representation theory of finite groups.  In this paper, we contribute to this line of research by studying the following question:
\smallskip
\begin{center}
\emph{For which finite groups $G$ is the socle $\soc(ZFG)$ of the center of $FG$ an ideal in $FG$?}
\smallskip
\end{center}

The corresponding problem was studied previously for the Jacobson radical of the center of $FG$ (see \cite{CLA69, KOS78, KUL20}) as well as for the Reynolds ideal (see \cite[Theorem~A]{BRE222}).
General results on symmetric algebras in which the socle of the center is an ideal were obtained in \cite{BRE221}. In~\cite{BRE222}, we focused on deriving general structural properties of a finite group $G$ from the assumption that $\soc(ZFG)$ is an ideal in $FG$. 
In contrast, the aim of the present paper is a precise classification of the groups with this property, which underlines the strength of its structural implications. Using the results of \cite{BRE222}, we first derive a detailed description of the groups with this property. For a prototype case, we then derive a complete classification. In essence, we show that these groups can be built as central products of extensions of Camina $p$-groups by an automorphism of large order.
\medskip

The starting point are the reductions of the main problem from \cite{BRE222}, which can be summarized as follows: 
%

\begin{Fact}[{\cite[Remark 3.1, Theorems 3 and 4]{BRE222}}]
We may assume that $F$ is an algebraically closed field of characteristic $p > 0$. Furthermore, it suffices to study finite groups $G$ with the following properties:
\begin{enumerate}[(i)]
	\item $G'$ is a Sylow $p$-subgroup of $G$ (in particular, we have $G \cong G' \rtimes H$ for an abelian $p'$-group~$H$), and 
	\item $O_{p'}(G) = 1$.  
\end{enumerate}
\end{Fact}

For the sake of simplicity, we call a group $G$ \emph{basic} if it satisfies (i) and (ii). The reduction of our main problem to basic groups will be reviewed in Section~\ref{sec:introduction}. 
%
%
\medskip

Let $G \cong G' \rtimes H$ be a finite basic group such that $\soc(ZFG)$ is an ideal in $FG$. By \cite[Proposition~3.20]{BRE222}, we have \[G'' \subseteq \Phi(G') \subseteq Z(G').\] Set $D \coloneqq G/G''$ and $Z_D \coloneqq Z(G')/G''$. Note that $D \cong D' \rtimes H$, where we identify $H$ with its image in $D$. As a central step, we obtain the following decomposition result:
\begin{theoremintro}\label{theo:uniquesolutiond}
	Let $F$ be a field of characteristic $p > 0$, and let $G$ be a finite basic group for which $\soc(ZFG)$ is an ideal in $FG$. The group $D$ has the following properties:
	\begin{enumerate}[(i)]
		\item $D' = T \times Z_D$ for an elementary abelian normal subgroup $T$ of $D$.
		\label{D1}
		\item Write $T = T_1 \times \dots \times T_n$ for minimal normal subgroups $T_1, \ldots, T_n$ of $D$. Then
		\[H/C_H(T) \cong H/C_H(T_1) \times \dots \times H/C_H(T_n),\]
		where, for $i \in \{1,\ldots, n\}$, the group $H/C_H(T_i)$ is cyclic of order $|T_i|-1$. 
		\label{D2}
		\label{D3}
	\end{enumerate} 
\end{theoremintro}

In contrast, the analysis of the action of $H$ on $Z_D$ is more complicated. For this reason, we focus on the case $Z_D = 1$, that is, $Z(G') = G''$, which we believe to be a cornerstone of the general classification. Our main result is the following characterization:  


\begin{theoremintro}\label{theo:b}
	Let $F$ be a field of characteristic $p > 0$ and let $G$ be a finite basic group satisfying $Z(G') = G''$. Then the following are equivalent: 
	\begin{enumerate}[(i)]
	\item $\soc(ZFG)$ is an ideal in $FG$.
	\item $G $ is a central product of groups $G_1, \ldots, G_n$ of the form $G_i = K_i \rtimes \langle \varphi_i \rangle$ ($i = 1, \dots, n$), where 
	\begin{enumerate}[(a)]
			\item $K_i$ is a Camina $p$-group of nilpotency class 2,
			\item $\varphi_i$ is an automorphism of order $|K_i : K_i'|-1$ of $K_i$ such that $\langle \varphi_i \rangle$ permutes the nontrivial cosets of~$K_i'$ in~$K_i$ transitively, and
			\item the action of $\langle \varphi_i \rangle$ on $K_i'$ is not faithful.
		\end{enumerate}
\end{enumerate}
\end{theoremintro}

In Example~\ref{ex:extraspecial}, we present families of groups naturally satisfying the conditions in Theorem~\ref{theo:b}\,(ii). The study of Camina $p$-groups with an automorphism of large order as in Theorem~\ref{theo:b}\,(ii) is interesting in its own right. 
\medskip

This paper is organized in the following way: In Section \ref{sec:introduction}, we introduce our notation and recall several results from \cite{BRE222}. In particular, we review the reduction from the general problem to basic groups. In Section~\ref{sec:nec}, we study the structure of the quotient group $G/G''$, proving Theorem~\ref{theo:uniquesolutiond}. Section~\ref{sec:generalgroupsfirstcase} is devoted to the classification in the special case $Z(G') = G''$, that is, to the proof of Theorem~\ref{theo:b}.

\section{Preliminaries}\label{sec:introduction}
In this section, we introduce the notation used in this paper and recall some results obtained in \cite{BRE222}. In particular, we explain how our main problem can be reduced to the investigation of basic groups (see Section~\ref{sec:reduction}).
\subsection{Notation}
In this paper, we always assume that $G$ is a finite group. As usual, let $G'$, $Z(G)$ and $\Phi(G)$ denote the derived subgroup, the center and the Frattini subgroup of $G$, respectively. We set $[a,b] = aba^{-1}b^{-1}$ for $a,b \in G$ and $[A,B] \coloneqq \langle [a,b] \colon a \in A,\ b \in B\rangle$ for $A,B \subseteq G$. Recall that $G = G_1 * G_2$ is the central product of subgroups $G_1$ and $G_2$ if we have $G = \langle G_1, G_2 \rangle$ and $[G_1, G_2] = 1$. For subsets $S$ and $T$ of $G$, set $C_T(S)$ to be the centralizer of $S$ in $T$. For $g \in G$, we write $[g]_G$ for the conjugacy class of $g$ in $G$. Note that $[g]_G = U_{G,g} g$ holds for $U_{G,g} \coloneqq \{[a,g] \colon a \in G\} \subseteq G'$. Let $\Cl(G)$ be the set of conjugacy classes in $G$ and write $g \sim h$ if $g,h \in G$ are conjugate in $G$. The group $G$ is called a Camina group if $[g]_G = g G'$ holds for all $g \in G \setminus G'$ (for results on the structure of Camina groups see, for example, \cite{DAR96}). For a prime $p \in \N$, the $p$-part and the $p'$-part of $g \in G$ are denoted by $g_p$ and $g_{p'}$, respectively. As usual, we set $O_p(G)$, $O_{p'}(G)$, $O^p(G)$ and $O^{p'}(G)$ to be the $p$-core, the $p'$-core, the $p$-residual subgroup and the $p'$-residual subgroup of $G$, respectively. Let $\Syl_p(G)$ be the set of Sylow $p$-subgroups of $G$. For $d \in \N$, let $\AGL(1,p^d) \cong \F_{p^d} \rtimes \F_{p^d}^\times$ denote the 1-dimensional affine linear group over $\F_{p^d}$. 
\medskip

For a finite-dimensional algebra $A$ over a field $F$, we consider its Jacobson radical $J(A)$ and its (left) socle~$\soc(A)$, the sum of all minimal left ideals of $A$. Both $J(A)$ and $\soc(A)$ are known to be ideals in $A$. In this paper, an ideal of $A$ is always meant to be a two-sided ideal, and $I \trianglelefteq A$ means that $I$ is an ideal of $A$.
\medskip

In the following, we study the group algebra $FG$ of a finite group $G$ over a field $F$. Recall that $FG$ is a symmetric algebra. For $X \subseteq G$, we set $$X^+ \coloneqq \sum_{x \in X} x \in FG.$$ It is well-known that the conjugacy class sums form an $F$-basis for the center $ZFG$ of $FG$. In this paper, we study the socle $\soc(ZFG)$ of $ZFG$, which is an ideal in $ZFG$, but not necessarily in $FG$. More precisely, $\soc(ZFG)$ is an ideal in $FG$ if and only if it is closed under multiplication with elements of $FG$. For a subset $S \subseteq ZFG$, let $\Ann_{ZFG}(S)$ be the annihilator of $S$ in $ZFG$. As $ZFG$ is finite-dimensional, we have $\soc(M) = \{m \in M \colon J(ZFG) m = 0\}$ for every $ZFG$-module $M$ (see \cite[Corollary~15.21]{AND74}). In particular, $\soc(ZFG) = \Ann_{ZFG}(J(ZFG))$.

\subsection{Previous work}

In this section, we summarize the results from \cite{BRE222} concerning the structure of the finite groups~$G$ for which $\soc(ZFG)$ is an ideal in $FG$. In Section~\ref{sec:reduction}, we explain several reduction steps. Section~\ref{sec:furtherwork} contains a summary of further results of \cite{BRE222} that are needed in the following. 

\subsubsection{Reductions}\label{sec:reduction}

In this section, we explain some reductions of the main problem obtained in \cite{BRE222}. In particular, we summarize the argument that it is sufficient to consider basic groups. 
\medskip

First, we consider the choice of the underlying field. As the problem is trivial for semisimple group algebras (see \cite[Remark 3.1]{BRE222}), it suffices to consider fields $F$ of characteristic $p > 0$. For every finite group $G$ and every field $F$ of characteristic $p > 0$, $\soc(Z\F_p G) \trianglelefteq \F_p G$ if and only if $\soc(ZFG) \trianglelefteq FG$. In the following, we can thus assume that~$F$ is algebraically closed. 
\medskip

As stated in the introduction, we call a finite group $G$ \emph{basic} if $G'$ is a Sylow $p$-subgroup of $G$ and $O_{p'}(G) = 1$ holds. Now let $G$ be an arbitrary finite group for which $\soc(ZFG)$ is an ideal in $FG$. Then $G = P \rtimes H$ for a Sylow $p$-subgroup~$P$ and an abelian $p'$-group $H$ (see \cite[Corollary~3.5]{BRE222}). For groups of this form, $\soc(ZFG) \trianglelefteq FG$ is equivalent to $\soc(ZF[G/O_{p'}(G)]) \trianglelefteq F[G/O_{p'}(G)]$ (see \cite[Lemma~3.10]{BRE222}). By going over to $G/O_{p'}(G)$, we may therefore assume $O_{p'}(G) = 1$. We refer to \cite[Section~3.3]{BRE222} for further details as well as various implications of these results in block theory. 
\medskip

By \cite[Theorem~4]{BRE222}, we have the central product decomposition $G = C_P(H) *O^p(G)$. Moreover, $\soc(ZFC_P(H))$ and $\soc(ZFO^p(G))$ are ideals in $FC_P(H)$ and $FO^p(G)$, respectively. Since the structure of the $p$-group $C_P(H)$ is determined by \cite[Theorem 2]{BRE222}, it suffices to consider the group $O^p(G)$. We may therefore assume $O^p(G) = G$, which implies that $G'$ is a Sylow $p$-subgroup of $G$ (see \cite[Remark 5.1]{BRE222}). Summarizing, we may assume that $G$ is a basic group. For further details on this reduction, we refer to \cite[Section~5]{BRE222}.

\subsubsection{Further results}\label{sec:furtherwork}
Let $F$ be an algebraically closed field of characteristic $p > 0$ and let $G$ be a finite group. In this section, we summarize results from \cite{BRE222} that are needed in the following sections. 
We frequently make use of the following criterion:
\begin{lemma}[{\cite[Lemma 3.3]{BRE222}}]\label{lemma:socideal}
	The socle $\soc(ZFG)$ is an ideal in $FG$ if and only if $\soc(ZFG) \subseteq (G')^+ \cdot FG$. 
\end{lemma}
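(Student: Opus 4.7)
My strategy is to translate the condition ``$\soc(ZFG)$ is closed under left multiplication by $FG$'' into a left-invariance condition under the derived subgroup, and then to identify the space of invariants with $(G')^+\cdot FG$.

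To start, I note that $\soc(ZFG)\subseteq ZFG$ trivially absorbs multiplications from $ZFG$, so the only thing to check is absorption of left multiplication by elements $g\in G$. Because $J(ZFG)$ is central in $FG$ (being contained in $ZFG$), for $x\in\soc(ZFG)$ and $g\in G$ one has $J(ZFG)\cdot gx = g\cdot J(ZFG)\cdot x = 0$; hence $gx$ lies in $\Ann_{ZFG}(J(ZFG))=\soc(ZFG)$ as soon as $gx\in ZFG$. Thus the ideal property is equivalent to requiring that $gx$ be central for every $g\in G$ and every $x\in \soc(ZFG)$.

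Next, for a fixed $g\in G$ and $x\in ZFG$, centrality of $gx$ is tested by conjugation: for every $a\in G$ one has $a(gx)a^{-1} = (aga^{-1})(axa^{-1}) = (aga^{-1})\,x$, using that $x$ is central. Writing $aga^{-1}=[a,g]g$ and exploiting once more that $x$ commutes with $g$, the equality $a(gx)a^{-1}=gx$ simplifies to $[a,g]\cdot x = x$. Letting $a,g$ range over $G$ and recalling that $G'$ is generated by commutators, the ideal condition is equivalent to the requirement that $c\cdot x = x$ for all $c\in G'$ and all $x\in\soc(ZFG)$.

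Finally, I would identify the $G'$-invariants under left multiplication with $(G')^+\cdot FG$. One inclusion is immediate from $c\cdot(G')^+ = (G')^+$ for $c\in G'$. For the other, expanding $x=\sum_{g\in G}\alpha_g\,g$ and comparing coefficients in $cx = x$ forces $\alpha$ to be constant on right cosets $G'g$, so that $x=(G')^+\cdot\bigl(\sum_i\alpha_{g_i} g_i\bigr)$ for any transversal $\{g_i\}$. Concatenating the three equivalences gives the claim. The subtle point is the first reduction: one must notice that centrality of $J(ZFG)$ already upgrades ``$gx\in ZFG$'' to ``$gx\in\soc(ZFG)$'', so that the ideal condition is really a centrality condition. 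After that, only a routine basis-level computation remains.
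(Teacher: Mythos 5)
Your argument is correct: the reduction of the ideal property to the centrality of $gx$ (via $\soc(ZFG)=\Ann_{ZFG}(J(ZFG))$, which the paper records), the conjugation computation showing this amounts to $c\cdot x=x$ for all $c\in G'$, and the identification of these invariants with $(G')^+\cdot FG$ are all sound. This is essentially the standard argument behind the cited lemma (the paper itself only quotes it from \cite[Lemma 3.3]{BRE222}), so nothing further is needed.
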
 

For central products, which arise frequently in our investigation, the following reduction theorem holds:

\begin{lemma}[{\cite[Theorem~3.33]{BRE222}}]\label{lemma:centralproduct}
	Let $G$ be the central product of subgroups $G_1$ and $G_2$. Then $\soc(ZFG)$ is an ideal in $FG$ if and only if $\soc(ZFG_i)$ is an ideal in $FG_i$ for each $i \in \{1,2\}$. 
\end{lemma}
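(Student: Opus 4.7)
The plan is to apply the criterion of Lemma~\ref{lemma:socideal} on both sides and then match the resulting inclusions through the central product structure. Set $Z:=G_1\cap G_2\subseteq Z(G_1)\cap Z(G_2)$, so that $G=G_1G_2$, $[G_1,G_2]=1$, $G'=G_1'G_2'$, and $G_1'\cap G_2'\subseteq Z$; there is also a canonical algebra isomorphism $FG\cong FG_1\otimes_{FZ}FG_2$. The task reduces to showing $\soc(ZFG)\subseteq(G')^+\cdot FG$ if and only if $\soc(ZFG_i)\subseteq(G_i')^+\cdot FG_i$ for $i\in\{1,2\}$.

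The key step is to establish multiplicative descriptions of both sides. For the ideal $(G')^+\cdot FG$, the $G'$-coset sums $(hG')^+$ form an $F$-basis, and writing $h=h_1h_2$ yields a factorization $(hG')^+=(h_1G_1')^+\cdot(h_2G_2')^+$ up to a scalar accounting for $G_1'\cap G_2'$; thus $(G')^+\cdot FG=((G_1')^+\cdot FG_1)\cdot((G_2')^+\cdot FG_2)$ inside $FG$. For the socle, I would combine $\soc(ZFG)=\Ann_{ZFG}(J(ZFG))$ with the fact that $ZFG$ is generated as an $F$-algebra by $ZFG_1$ and $ZFG_2$ (since conjugacy classes of $G$ factor as $[g_1g_2]_G=[g_1]_{G_1}\cdot[g_2]_{G_2}$) to derive $J(ZFG)=J(ZFG_1)\cdot ZFG_2+ZFG_1\cdot J(ZFG_2)$, and hence $\soc(ZFG)=\soc(ZFG_1)\cdot\soc(ZFG_2)$.

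Granting both decompositions, one direction (from $G_i$ to $G$) follows from the product of inclusions. For the converse, I would argue at the level of class sums: every class sum of $G$ is a scalar multiple of a product $[g_1]_{G_1}^+\cdot[g_2]_{G_2}^+$, and the elementary combinatorial equivalence "$[g]_G=gG'$ iff $[g_i]_{G_i}=g_iG_i'$ for both $i$" means that membership of a class sum in $(G')^+\cdot FG$ behaves multiplicatively under the central product. Thus a failure of $\soc(ZFG_i)\subseteq(G_i')^+\cdot FG_i$ for some $i$ yields a class sum witness of a failure of $\soc(ZFG)\subseteq(G')^+\cdot FG$, and Lemma~\ref{lemma:socideal} concludes the argument.

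The main obstacle will be the two multiplicative descriptions when $p\mid|Z|$: in that case $Z^+$ is nilpotent in $FG$, the identification $FG\cong FG_1\otimes_{FZ}FG_2$ carries nontrivial relations, and scalar factors such as $|Z\cap G_1'G_2'|$ can vanish in $F$. I expect these issues to be resolved by a careful dimension count, exhibiting explicit class-sum bases on both sides of each claimed equality and verifying directly that no unexpected collapses occur; the clean combinatorial reformulation of $(G')^+\cdot FG$ via coset sums should make this tractable even in the nilpotent regime.
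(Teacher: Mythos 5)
The paper does not prove this lemma; it is quoted verbatim from \cite[Theorem~3.33]{BRE222}, so there is no internal argument to compare against. Judged on its own merits, however, your proposal has a genuine gap that the ``careful dimension count'' you defer to cannot repair, because the two multiplicative identities on which the whole plan rests are simply false when $p$ divides $|Z|$ with $Z = G_1\cap G_2$.

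Consider first the claim $\soc(ZFG)=\soc(ZFG_1)\cdot\soc(ZFG_2)$. Take $G_1=G_2=C_{p^2}$ glued along their common subgroup $Z\cong C_p$, so $G\cong C_{p^2}\times C_p$. Each $FG_i$ is local with $\soc(ZFG_i)=FG_i^+$, and in $FG$ one computes $G_1^+\cdot G_2^+ = |Z|\cdot G^+ = 0$, so the right-hand side vanishes, while $\soc(ZFG)=FG^+\neq 0$. Likewise for the claim $(G')^+\cdot FG = \bigl((G_1')^+\cdot FG_1\bigr)\cdot\bigl((G_2')^+\cdot FG_2\bigr)$: take $G_1,G_2$ extraspecial of order $p^3$ with $G_1'=G_2'=Z$ and $G=G_1*_ZG_2$. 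Then $G'=Z$ and $(G')^+\cdot FG = Z^+\cdot FG\neq 0$, whereas $Z^+\cdot FG_1\cdot Z^+\cdot FG_2\subseteq (Z^+)^2\cdot FG = |Z|\,Z^+\cdot FG = 0$. So both factorizations collapse precisely in the regime you flagged as the ``main obstacle,'' and they collapse to zero, so no rescaling or change of basis can restore them. The intermediate claim $J(ZFG)=J(ZFG_1)\cdot ZFG_2+ZFG_1\cdot J(ZFG_2)$ is also problematic for the same reason: the natural map $ZFG_1\otimes_{FZ} ZFG_2\to ZFG$ is neither injective nor surjective in general (class sums of $G$ are sums over $G$-orbits, which fuse $G_1\times G_2$-orbits, and the scalar relating $[g]_G^+$ to $[g_1]_{G_1}^+[g_2]_{G_2}^+$ lies in $p\mathbb{Z}$ whenever centralizer orders interact badly with $|Z|$). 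Any correct proof must confront the fact that the ``tensor product over $FZ$'' picture degenerates in characteristic $p$ rather than expecting it to persist after a dimension count; the argument in \cite{BRE222} does not proceed via these product formulas.
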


By the previous section, we may assume 
that $G$ is a basic group. Until the end of this section, we additionally assume that $\soc(ZFG)$ is an ideal in $FG$. Note that
\[
\soc(FG) = \Ann_{FG}(J(FG)) = \Ann_{FG}(J(FG') FG) = (G')^+ \cdot FG,
\] as $G'$ is a Sylow $p$-subgroup of $G$ (see \cite[Theorem~1.11.10]{LIN18}).  
By Lemma~\ref{lemma:socideal}, we then obtain 
\[\soc(ZFG) = (G')^+ \cdot FG = \soc(FG).\]


Throughout, we use the following central result on the nilpotency class of $G'$:
\begin{lemma}[{\cite[Proposition~3.20]{BRE222}}]\label{lemma:nilpotencyclassofdg}
We have $\Phi(G') \subseteq Z(G')$. In particular, $G'$ has nilpotency class at most~2, and $G''$ is elementary abelian.
\end{lemma}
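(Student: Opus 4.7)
The plan is to exploit the equality $\soc(ZFG) = (G')^+ \cdot FG$ that follows from Lemma~\ref{lemma:socideal} (the hypothesis gives the inclusion $\subseteq$; the reverse inclusion is automatic in this setup because $(G')^+$ is central and $(G')^+FG$ is right-annihilated by $J(FG) = I(G') \cdot FG$). This ideal carries the $F$-basis $\{(G')^+ h : h \in H\}$, and its intersection with $FG'$ reduces to the single line $F \cdot (G')^+$. A first consequence I would record is that no class sum $[g]^+$ with $g \in G' \setminus \{1\}$ can lie in $\soc(ZFG)$: such a class sum is supported in $G'$, so it would have to be a scalar multiple of $(G')^+$, forcing $[g] = G'$ and thereby $g = 1$.

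Suppose, towards a contradiction, that $\Phi(G') \not\subseteq Z(G')$. Since $\Phi(G') = [G',G'] \cdot (G')^p$, we may pick $y \in \Phi(G')$ and $x \in G'$ with $[x,y] \neq 1$. The strategy would be to produce an element of $\soc(ZFG)$ whose $FG'$-component is not proportional to $(G')^+$, contradicting the basis description above. Natural candidates are products $(G')^+ \cdot \alpha$ for $\alpha$ built from $x$ and $y$, or products of $\soc(ZFG)$-elements with class sums of elements of $G \setminus G'$; the ideal property of $\soc(ZFG)$ forces all such products back into $(G')^+ FG$, and unravelling this along the coset decomposition $G = \bigcup_{h \in H} G' h$ should translate into a class-algebraic relation in $G'$ that is incompatible with $[x,y] \neq 1$.

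The main obstacle I foresee is identifying the precise identity whose failure detects $[\Phi(G'), G'] \neq 1$. I expect this to require a careful analysis of how $H$ permutes the conjugacy classes of $G$ inside each coset $G' h$, combined with the characterization of $\Phi(G')$ as the set of non-generators of $G'$ (so that elements of $\Phi(G')$ are forced to interact trivially with a full generating set of $G'$ modulo any maximal subgroup). Once $\Phi(G') \subseteq Z(G')$ is established, the nilpotency-class conclusion is immediate, since $G'' = [G',G'] \subseteq \Phi(G') \subseteq Z(G')$.
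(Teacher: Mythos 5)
There is a genuine gap here. Your first paragraph is fine as far as it goes: under the standing hypotheses the equality $\soc(ZFG) = (G')^+\cdot FG$ does hold (the paper records exactly this after Remark~\ref{rem:reduction}), and the observation that an element of $(G')^+\cdot FG$ supported in $FG'$ must be a scalar multiple of $(G')^+$ is correct, since the coset sums $(G'h)^+$ form a basis of $(G')^+\cdot FG$. But everything after "Suppose, towards a contradiction" is a plan, not a proof: the entire content of the statement lies in producing, from $[\Phi(G'),G']\neq 1$, a concrete element that is in $\soc(ZFG)$ (i.e.\ a central element annihilating $J(ZFG)$, checked against a basis such as the one in Lemma~\ref{lemma:structjzfg}) yet not in $(G')^+\cdot FG$. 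You yourself flag this as "the main obstacle," and no candidate identity or element is ever exhibited, so the argument is not there. Note also that this lemma is not proved in the present paper at all; it is imported from \cite[Proposition~3.20]{BRE222}, where the actual construction is carried out, so the sketch cannot be checked off against anything internal either.

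Moreover, the candidates you do name cannot work in principle. Any product $(G')^+\cdot\alpha$ lies in $(G')^+\cdot FG$ by definition, so it can never have an $FG'$-component that is not proportional to $(G')^+$; and a product of an element of $\soc(ZFG)$ with a class sum stays in $\soc(ZFG)$ (the socle is an ideal of $ZFG$), hence under the hypothesis it again lies in $(G')^+\cdot FG$. So neither family of elements can exhibit the desired failure. The successful pattern — visible elsewhere in this paper, e.g.\ Remark~\ref{const:cindp} and Lemmas~\ref{lemma:yannihilatesbz}--\ref{lemma:yannihilateslengthp}, and in the cited proof in \cite{BRE222} — is the reverse one: build an explicit central element from scratch (typically via a nontrivial homomorphism on a central elementary abelian subgroup, with coefficients constant on suitable classes), verify directly that it annihilates the basis of $J(ZFG)$, and observe that its coefficients on $G'$ are non-constant, contradicting $\soc(ZFG)\subseteq (G')^+\cdot FG$. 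Without an argument of this kind, tailored to an element $y\in\Phi(G')\setminus Z(G')$, the claimed contradiction with $[x,y]\neq 1$ is not established.
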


A substantial part of this paper consists of the investigation of the quotient group $D \coloneqq G/G''$. For $g \in G$, we set $\bar{g} \coloneqq gG'' \in D$ (similarly for subsets of $G$). Since $D \cong D' \rtimes \bar{H}$ with $\bar{H} \cong H$ holds, we usually identify~$\bar{H}$ and~$H$. The following observations will be used frequently throughout this paper:

\begin{Remark}\label{rem:centralizerhdd}\label{rem:trivia}
$\null$
	\begin{enumerate}[(i)]
		\item By \cite[Theorem 6.4.3]{GOR68}, we have $C_G(G') = Z(G')$. By \cite[Remark~5.1]{BRE222}, we have $G' = [G',H]$ and $C_{G'}(H) = Z(G)$. In particular, this yields $G = H G' = H [G',H]$. 
		\item By (i), we obtain $D' = [D',H]$. Hence $C_{D'}(H) = 1$ by \cite[Theorem~5.2.3]{GOR68}.
		\item Let $h \in C_H(D')$. Due to $G'' \subseteq \Phi(G')$, $h$ acts trivially on $G'/\Phi(G')$ and hence on~$G'$ by \cite[Theorem~5.1.1]{GOR68}. Since $H$ is abelian, this implies $h \in Z(G) \cap H \subseteq O_{p'}(G) = 1$. In particular, this yields $O_{p'}(D) \subseteq C_H(D') = 1$, so $D$ is basic.
		\item Let $h \in H$ and $a \in [h]_G$. We write $a = c h$ with $c \in  G'$. If $h$ and $c$ commute, we have $c = a_p$. Since $h$ and~$a$ are conjugate in $G$, this yields $c = 1$.
	\end{enumerate}
\end{Remark}

Throughout, we use the following basis for $J(ZFG)$: 

\begin{lemma}[{\cite[Theorem~3.14]{BRE222}}]\label{lemma:structjzfg}
For $C \in \Cl(G) \setminus \{1\}$, we set $b_C = C^+ - |C| \cdot 1$. 
	Then $\{b_C \colon C \in \Cl(G) \setminus \{1\}\}$ is an $F$-basis for $J(ZFG)$.
\end{lemma}

\begin{Remark}\label{rem:pdividesc}
Let $C \in \Cl(G)$. Then $p$ divides $|C|$ if and only if $C \not \subseteq Z(G')$ holds. 
\end{Remark}

%
%

In the following, we consider the canonical projection
$$\nu \colon FG \to FD,\ \sum_{g \in G} a_g g \mapsto \sum_{g \in G} a_g \bar{g}.$$

We set $\DCLab \coloneqq \{C \in \Cl(G) \setminus \{1\}\colon \nu(b_C) \neq 0\}$. By \cite[Lemma~3.28]{BRE222}, we have
\begin{equation}\label{eq:dcl}
\DCLab = \left\{C \in \Cl(G) \colon C \not \subseteq G'' \text{ and } p \text{ does not divide } \frac{|C|}{|\bar{C}|}\right\}.
\end{equation}
The subset $\SIDCLab \coloneqq \{b_{\bar{C}} \colon C \in \DCLab\}$ of $J(ZFD)$ plays a fundamental role in our investigation. Here, $b_{\bar{C}}$ denotes the basis element of $J(ZFD)$ corresponding to $\bar{C} \in \Cl(D)$ (see Lemma~\ref{lemma:structjzfg}). We mainly use the following special case of \cite[Theorem 3.31]{BRE222}:

\begin{theorem}\label{theo:anndecconjconstantcoeffs}
We have $\Ann_{ZFD}\bigl(\SIDCLab\bigr) = (D')^+ \cdot FD$.
\end{theorem}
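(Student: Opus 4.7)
The plan is to reformulate $x \in (D')^+ \cdot FD$ in terms of constant coefficient functions on each $D'$-coset, and then derive these constraints from the annihilation conditions. Write $x = \sum_{\bar g \in D} a_{\bar g} \bar g$ with $a_{\bar g} \in F$ constant on $D$-conjugacy classes. Since $D = D' \rtimes H$ with $D'$ abelian, every $\bar g \in D$ decomposes uniquely as $\bar g = t \bar h$ with $t \in D'$ and $\bar h \in H$; define the function $f_{\bar h} \colon D' \to F$ by $f_{\bar h}(t) := a_{t \bar h}$, viewed equivalently as the element $f_{\bar h} = \sum_{t \in D'} f_{\bar h}(t) \cdot t \in F[D']$. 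Then $x$ lies in $(D')^+ \cdot FD$ if and only if each $f_{\bar h}$ is a constant function on $D'$, i.e. $f_{\bar h} \in F \cdot (D')^+$.

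The next step is to translate the conditions $x \cdot b_{\bar C} = 0$ into constraints on the $f_{\bar h}$. Using that $D'$ is abelian and $H$ is abelian, a direct computation shows that the $D$-conjugacy class of $t \bar h'$ equals $U_{\bar h'} \cdot O_H(t) \cdot \bar h'$, where $U_{\bar h'} := [D', \bar h']$ is an $H$-invariant subgroup of $D'$. For $\bar h' \in H \setminus \{1\}$, Remark~\ref{rem:centralizerhdd}(iii) gives $U_{\bar h'} \neq 1$, so $[\bar h']_D = U_{\bar h'} \bar h'$ has nontrivial $p$-power size and $b_{[\bar h']_D} = U_{\bar h'}^+ \bar h'$. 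Provided $[\bar h']_D \in \SIDCLab$ (which one verifies by exhibiting a suitable lift of $[\bar h']_D$ in $\DCLab$), the relation $x \cdot U_{\bar h'}^+ \bar h' = 0$ translates, after expanding coefficients and using the $H$-invariance of $U_{\bar h'}$, into $f_{\bar h''} \cdot U_{\bar h'}^+ = 0$ in $F[D']$ for every $\bar h'' \in H$; equivalently, each $f_{\bar h''}$ lies in the annihilator of $U_{\bar h'}^+$ in $F[D']$. Analogous computations for non-central classes $\bar C = U_{\bar h'} O_H(t) \bar h'$ with $t \neq 1$ in $\SIDCLab$ yield further relations on the functions $f_{\bar h''}$.

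The main obstacle is to combine these relations to conclude that each $f_{\bar h''}$ is a scalar multiple of $(D')^+$. Using only the central classes $[\bar h']_D$, the resulting annihilator conditions alone generally do not suffice: even though $D' = \langle U_{\bar h'} : \bar h' \in H \setminus \{1\} \rangle$ by Remark~\ref{rem:trivia}(i), the intersection of the various annihilators in $F[D']$ can properly contain $F \cdot (D')^+$ (for instance when $D' \cong C_p \times C_p$ with $H$ acting by nontrivial scalars). The additional relations coming from non-central classes in $\SIDCLab$ are therefore essential, and combining them requires a careful case analysis of the $H$-action on $D'/U_{\bar h'}$ and of the relationship between $G$- and $D$-classes. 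The hypothesis that $\soc(ZFG)$ is an ideal in $FG$ enters implicitly here, by guaranteeing that sufficiently many $\bar C \in \Cl(D)$ admit lifts to $\DCLab$, so that the corresponding relations are indeed available. This combinatorial step is the technical core of the corresponding argument in \cite[Theorem 3.31]{BRE222}, of which the present theorem is explicitly noted to be a special case.
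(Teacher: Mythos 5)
The paper itself offers no proof of this theorem: it simply states, immediately before the theorem, that ``the following condition is a special case of \cite[Theorem~3.31]{BRE222}.'' Your proposal ultimately arrives at the same place --- the final paragraph defers the ``technical core'' to that very citation --- so in that sense the two ``proofs'' agree. The difference is that you precede the citation with an outline of how such a proof might go (coefficient functions $f_{\bar h}$, the class decomposition $[t\bar h']_D = U_{\bar h'}\cdot O_H(t)\cdot \bar h'$, the relations $f_{\bar h''}\cdot U_{\bar h'}^+ = 0$, the $C_p\times C_p$ counterexample showing these relations do not suffice alone), and this outline is a useful commentary that the paper does not give.

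Be careful, however, not to mistake the outline for a proof. Several steps are waved through: the decomposition $[t\bar h']_D = U_{\bar h'}\cdot O_H(t)\cdot\bar h'$ holds as a set equality, but the cosets $s\,U_{\bar h'}$ for $s$ in the $H$-orbit of $t$ can coincide, so passing from this set description to a clean relation on the functions $f_{\bar h''}$ requires more care than ``expanding coefficients''; the phrase ``one verifies by exhibiting a suitable lift'' hides the content of \cite[Lemma~3.28]{BRE222} determining which $\bar C$ actually appear in $\SIDCLab$; and the ``careful case analysis'' combining the non-central relations is precisely the part that does the work. Your own $C_p\times C_p$ example shows that without those missing relations the conclusion fails, so everything hinges on them. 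None of this is a hidden error --- you flag the incompleteness yourself and correctly identify where it is resolved --- but as written the proposal is an annotated reduction to \cite[Theorem~3.31]{BRE222}, not an independent argument. Since that is also exactly what the paper does, the comparison is favorable; just be explicit with yourself that you have reproduced the citation, not replaced it.
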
 

For $h \in H$, recall that $U_{D,h} = \{[d,h] \colon d \in D'\}$. 
When applying Theorem~\ref{theo:anndecconjconstantcoeffs}, we use the following description of $\Ann_{ZFD}(\SIDCLab)$:

\begin{lemma}\label{lemma:pstrichkonjugation}
We have 
\[\Ann_{ZFD}(\SIDCLab) =\bigcap_{\substack{C \subseteq Z(G'), \\ C \not \subseteq G''}} \Ann_{ZFD}(\bar{C}^+ - |\bar{C}| \cdot 1) \cap \bigcap_{\substack{\text{$p'$-conjugacy classes} \\ C \in \DCLab}} \Ann_{ZFD}(\bar{C}^+).\]
Moreover, if $h \in H$ and $C \coloneqq [h]_G \in \DCLab$, then $U_{D,h}$ is a nontrivial normal subgroup of $D$ and \[\Ann_{ZFD}(\bar{C}^+) = \Ann_{ZFD}(U_{D,h}^+).\]
\end{lemma}

\begin{proof}
First let $C \in \Cl(G)$ with $C \not \subseteq Z(G')$ and let $C' \in \Cl(G)$ be the conjugacy class containing the $p'$-parts of the elements in $C$. We claim that $C \in \DCLab$ implies $C' \in \DCLab$. To see this, let $g \in C$. As $G$ is solvable and $H$ is a Hall $p'$-subgroup of $G$, $g_{p'}$ is conjugate in $G$ to an element in $H$. 
Hence we may assume $h \coloneqq g_{p'} \in H$. If $h = 1$, we have $C \subseteq G'$ and hence $\bar{C} \subseteq D'$, so $|\bar{C}|$ is not divisible by $p$. Due to $C \not \subseteq Z(G')$, $p$ divides~$|C|$ (see Remark~\ref{rem:pdividesc}), which is a contradiction to $C \in \DCLab$ by \eqref{eq:dcl}. Hence $h \neq 1$. By \cite[Lemma~3.16]{BRE222}, applied with $N = G''$ and using $\nu(b_C) = \nu(C^+) \neq 0$ due to $C \in \DCLab$, we have $C \subseteq C_G(G'')$, which implies $C' \subseteq C_G(G'')$. Now \cite[Lemma~3.30]{BRE222}, applied to $N = G''$, yields $C' \in \DCLab$. Moreover, by \cite[Corollary~3.23]{BRE222}, we have $\Ann_{ZFD}(\bar{C}'^+) \subseteq \Ann_{ZFD}(\bar{C}^+)$.
Summarizing, this yields 
\begin{alignat*}{1}
\Ann_{ZFD}(\SIDCLab) &= \bigcap_{\substack{C \subseteq Z(G'), \\ C \not \subseteq G''}} \Ann_{ZFD}(b_{\bar{C}}) \cap \bigcap_{\substack{C \in \DCLab,\\ C \not \subseteq Z(G')}} \Ann_{ZFD}(b_{\bar{C}}) \\
&= \bigcap_{\substack{C \subseteq Z(G'), \\ C \not \subseteq G''}} \Ann_{ZFD}(b_{\bar{C}}) \cap \bigcap_{\substack{\text{$p'$-conjugacy classes} \\ C \in \DCLab}} \Ann_{ZFD}(b_{\bar{C}}) \\
&=\bigcap_{\substack{C \subseteq Z(G'), \\ C \not \subseteq G''}} \Ann_{ZFD}(\bar{C}^+ - |\bar{C}| \cdot 1) \cap \bigcap_{\substack{\text{$p'$-conjugacy classes} \\ C \in \DCLab}} \Ann_{ZFD}(\bar{C}^+).
\end{alignat*}
Now let $C \in \DCLab$ be a $p'$-conjugacy class. By the first part, $C$ contains an element $h \in H \setminus \{1\}$. 
By \cite[Lemma~3.22]{BRE222}, we have $\bar{C} = U_{D,h} \cdot \bar{h}$ and $U_{D,h}$ is a nontrivial normal subgroup of~$D$. Moreover, $\Ann_{ZFD}(\bar{C}^+) = \Ann_{ZFD}(U_{D,h}^+)$.
\end{proof}

%

\section{\texorpdfstring{Structure of $G/G''$}{Structure of G/G''}}\label{sec:nec}
Throughout this section, let $F$ be an algebraically closed field of characteristic $p > 0$ and let $G$ be a finite basic group for which $\soc(ZFG)$ is an ideal in $FG$. 
As before, we write $G = G' \rtimes H$ for a Hall $p'$-subgroup~$H$ of $G$. 
In this section, we study the structure of the quotient group $D \coloneqq G/G''$. Let $Z_D \coloneqq Z(G')/G''$. Our aim is to prove the following result:
\setcounter{theoremintro}{0}
\begin{theoremintro}
	The group $D$ has the following properties:
	\begin{enumerate}[(i)]
		\item $D' = T \times Z_D$ for an elementary abelian normal subgroup $T$ of $D$.
		\item Write $T = T_1 \times \dots \times T_n$ for minimal normal subgroups $T_1, \ldots, T_n$ of $D$. Then
		\[H/C_H(T) \cong H/C_H(T_1) \times \dots \times H/C_H(T_n).\]
		Moreover, for $i \in \{1,\ldots, n\}$, the group $H/C_H(T_i)$ is cyclic of order $|T_i|-1$.
	\end{enumerate} 
\end{theoremintro}

Parts (i) and (ii) of Theorem~\ref{theo:uniquesolutiond} will be proven in Sections~\ref{sec:decompdd} and~\ref{sec:actionh}, respectively. 

\subsection{\texorpdfstring{Decomposition of $D'$}{Decomposition of D'}}\label{sec:decompdd}

In this section, we derive the direct product decomposition of $D'$ given in part (i) of Theorem~\ref{theo:uniquesolutiond}. It forms the basis for all further results. Recall that by Lemma~\ref{lemma:nilpotencyclassofdg}, we have $G'' \subseteq Z(G')$. In the following, we 
identify~$H$ with its image in~$D$. 
\medskip

Let $J(G')$ denote the Thompson subgroup of $G'$, that is, the subgroup of $G'$ generated by all abelian subgroups of maximal order. The following result on $J(G')$ might be of independent interest: 

\begin{lemma}\label{lemma:j}
We have $J(G') = G' = MZ(G')$ for $M \coloneqq \{x \in G' \colon x^p \in G'' \} \trianglelefteq G$.
\end{lemma}

\begin{proof}
Set $J \coloneqq J(G')$ and $\hat{M}  \coloneqq \left\{x \in [J,G] \colon x^p \in [J,[J,G]]\right\}$. 
Then \cite[Lemma~3.18]{BRE222}, applied with $N = J$, yields 
\begin{equation}\label{eq:eqj}
G' = C_{G'}(J) \hat{M} = C_{G'}(J) J
\end{equation}
(using $\hat{M} \subseteq J \subseteq G'$).
Observe that $C_{G'}(J) \subseteq J$ as every element in $C_{G'}(J)$ centralizes every abelian subgroup $A$ of maximal order in $G'$ and is thus contained in $A$ by maximality. This implies $G' = J$. Using $G' =[G'H,G'H] = [G',G'H]= [G',G]$, this yields $\hat{M} = M$, and then $G' = Z(G') M = M Z(G')$ follows by~\eqref{eq:eqj}.
\end{proof}


With this, we obtain the desired decomposition of $D'$:

\begin{theorem}\label{theo:structuredgmoddp}
There exists an elementary abelian normal subgroup $T$ of $D$ with $D' = T \times Z_D$.  
\end{theorem}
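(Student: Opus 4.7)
The plan is to reduce the existence of $T$ to the inclusion $(D')^p \subseteq (Z_D)^p$ of $p$-th power subgroups and then to extract this inclusion from the socle hypothesis via Theorem~\ref{theo:anndecconjconstantcoeffs}. The preliminaries are direct consequences of Lemma~\ref{lemma:nilpotencyclassofdg}: since $G'$ has nilpotency class at most two, $D' = G'/G''$ is abelian; since $Z(G')$ is characteristic in $G'$, the subgroup $Z_D$ is $H$-invariant; and since $\Phi(G') \subseteq Z(G')$ with $G'/\Phi(G')$ elementary abelian, so is $D'/Z_D \cong G'/Z(G')$.

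The key reduction is: an elementary abelian $H$-invariant complement $T$ to $Z_D$ in $D'$ exists if and only if $(D')^p \subseteq (Z_D)^p$. Necessity is immediate, since $T^p = 1$ forces $(D')^p = (Z_D)^p$. For sufficiency, if $(D')^p \subseteq (Z_D)^p$, then every $x \in D'$ admits $z \in Z_D$ with $x^p = z^p$, so $xz^{-1} \in \Omega_1(D')$ and hence $D' = \Omega_1(D') \cdot Z_D$. The elementary abelian subgroup $\Omega_1(D')$ is an $\F_p H$-module (being characteristic in $D'$), and Maschke's theorem provides an $H$-invariant complement $T$ inside $\Omega_1(D')$ to the submodule $\Omega_1(Z_D) = \Omega_1(D') \cap Z_D$. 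Then $T$ is elementary abelian with $T \cap Z_D = T \cap \Omega_1(Z_D) = 1$ and $T \cdot Z_D = \Omega_1(D') \cdot Z_D = D'$; normality $T \trianglelefteq D$ follows because $D'$ is abelian and $H$ normalizes $T$ by construction.

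The substantive step is then $(D')^p \subseteq (Z_D)^p$, which I would prove by contradiction via Theorem~\ref{theo:anndecconjconstantcoeffs}. Assume $\bar{x} \in D'$ satisfies $\bar{x}^p \in Z_D \setminus (Z_D)^p$, and choose a homomorphism $\varphi \colon Z_D \to \F_p$ vanishing on $(Z_D)^p$ but nonzero on $\bar{x}^p$. The plan is to use $\varphi$ to build $y \in ZFD$ supported on $D' \bar x$ that annihilates every element of $\SIDCLab$ yet fails to have constant coefficients on that coset, contradicting Theorem~\ref{theo:anndecconjconstantcoeffs}. Using Remark~\ref{rem:pstrichkonjugation}, the annihilator conditions on $y$ become $y \cdot (\bar C^+ - |\bar C|) = 0$ for central classes $C \subseteq Z(G') \setminus G''$ together with $y \cdot U_h^+ = 0$ for the subgroups $U_h = \{[d,\bar h] : d \in D'\}$ attached to $p'$-class representatives $h \in H$ with $[h] \in \DCLab$. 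The main obstacle will be producing this $y$: while the reduction and Maschke step above are formal, $y$ must simultaneously be $D$-central, have coefficients on $D' \bar x$ built from $\varphi$ in a way that fails constancy, and have products $y \cdot U_h^+$ that cancel via the $H$-action; matching the multiplicative structure of the $U_h^+$ in $FD$ with the additive structure of $\varphi$ on $Z_D/(Z_D)^p$ is exactly where the socle hypothesis must be used sharply.
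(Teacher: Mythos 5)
Your formal reduction is fine and in fact shadows the paper's own argument: the subgroup $\Omega_1(D')$ you use is exactly $M/G''$ for $M = \{x \in G' : x^p \in G''\}$, and once one knows $D' = \Omega_1(D')\cdot Z_D$ (equivalently your inclusion $(D')^p \subseteq (Z_D)^p$, equivalently $G' = M\,Z(G')$), Maschke's theorem finishes the proof just as you describe. The problem is the substantive step. The paper does not derive $G' = M\,Z(G')$ by constructing an annihilator element; it observes that $G' = [G',G]$ and then applies \cite[Lemma~3.18]{BRE222} to the normal subgroup $M$, so the socle hypothesis enters only through that already-established lemma. You instead propose to extract $(D')^p \subseteq (Z_D)^p$ from Theorem~\ref{theo:anndecconjconstantcoeffs} by building a central element $y \in ZFD$ from a homomorphism $\varphi \colon Z_D \to \F_p$ killing $(Z_D)^p$ but not $\bar{x}^p$, supported on a coset $D'\bar{x}$, annihilating all of $\SIDCLab$ and having non-constant coefficients there — and you never construct it, explicitly conceding that matching $\varphi$ with the subgroups $U_h$ "is exactly where the socle hypothesis must be used sharply."

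That is a genuine gap, not a presentational one: the inclusion $(D')^p \subseteq (Z_D)^p$ is false for general groups of the assumed shape, so it cannot follow from your formal preliminaries; all the content lies in the missing construction. Moreover it is far from routine. To apply Theorem~\ref{theo:anndecconjconstantcoeffs} you must verify that your candidate $y$ is central in $FD$, that it annihilates $\bar{C}^+ - |\bar{C}|\cdot 1$ for every nontrivial class $\bar{C} \subseteq Z_D$, and that it annihilates $U_h^+$ for every relevant $p'$-class — but at this point of the paper the structure of the $U_h$ is not yet pinned down (the identification $U_h = T_i$ comes only later, in Lemma~\ref{lemma:ficyclic}, and itself uses the decomposition you are trying to prove), and the analogous coefficient constructions that do appear in the paper (Remark~\ref{const:cindp}, Lemmas~\ref{lemma:yannihilatess}--\ref{lemma:chddg}) require substantial case analysis and extra structural input. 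Without carrying out this construction, or else reproving something equivalent to \cite[Lemma~3.18]{BRE222}, your argument establishes only the easy equivalence and leaves the theorem unproved.
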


\begin{proof}
By Lemma~\ref{lemma:j}, we have $G' = MZ(G')$ with $M \coloneqq \{x \in G' \colon x^p \in G'' \} \trianglelefteq G$. 
Note that $M/G''$ is elementary abelian. By Maschke's theorem, there exists a normal subgroup $L$ of $G$ with $G'' \subseteq L $ such that $M/G'' = L/G'' \times C_{M}(G')/G''$ holds. With this, we obtain $$D' =G'/G'' = (M/G'') (Z(G')/G'') = L/G'' \times Z(G')/G''.$$ 
Setting $T \coloneqq L/G''$ proves the statement. 
\end{proof}

This proves the first part of Theorem~\ref{theo:uniquesolutiond}. 

\begin{Remark}\label{rem:di2notpossible}
	Observe that the subgroups of $D'$ which are normal in $D$ are precisely the $\F_p H$-submodules of $D'$. In particular, the minimal normal subgroups of $D$ contained in $T$ are the simple $\F_p H$-submodules of~$T$. 
In the situation of Theorem~\ref{theo:structuredgmoddp}, $T$ is a semisimple $\F_p H$-module. Hence there exist $n \in \N_0$ and simple $\F_p H$-modules $T_1, \ldots, T_n$ with 
\begin{equation*}
T = T_1 \times \dots \times T_n.
\end{equation*} 
Due to $C_{D'}(H) = 1$ (see Remark~\ref{rem:trivia}), $T_i$ is a nontrivial $\F_p H$-module for all $i \in \{1, \ldots,n\}$. In particular, we have $|T_i| \geq 3$. 
\end{Remark}

Until the end of this section, let $T$ be as in Theorem~\ref{theo:structuredgmoddp} and fix a decomposition $T = T_1 \times \dots \times T_n$ as in Remark~\ref{rem:di2notpossible}.


\begin{Example}\label{ex:sl}
	Let $F$ be a field of characteristic 2 and set $G \coloneqq \SL_2(\F_3)$. We have $G = G' \rtimes H$ with $G' \cong Q_8$ and $H \cong C_3$. Moreover, $Z(G) \eqqcolon \langle z \rangle$ is cyclic of order 2. 
	\medskip
	
	We first show that $\soc(ZFG)$ is an ideal in $FG$. For $h \in H$, we set $S_h \coloneqq \soc(ZFG) \cap FhG'$. Since $\soc(ZFG) = \bigoplus_{h \in H} S_h$ holds by \cite[Remark~3.15]{BRE222} and we have $(hG')^+ \in S_h$ for all $h \in H$, it suffices to show $\dim_F S_h = 1$. The derived subgroup $G'$ decomposes into the conjugacy classes $\{1\}$, $Z(G) \setminus \{1\}$ and $G' \setminus Z(G)$, which easily yields $S_1 \subseteq FZ(G)^+ + F(G' \setminus Z(G))^+$. For $h \in H \setminus \{1\}$, the coset $hG'$ consists of the conjugacy classes $[h]_G$ and $[hz]_G$, so $S_h \subseteq F[h]_G^+ + F[hz]_G^+$ follows. Since $Z(G)^+ \cdot [h]_G^+ = (hG')^+ \neq 0$ holds for all $h \in H \setminus \{1\}$, we obtain $\dim_F S_h = 1$ for all $h \in H$.  
	\medskip
	
	The group $H$ acts transitively on the nontrivial elements of $D' \cong C_2 \times C_2$. In particular, $D'$ is a simple $\F_2 H$-module, so we obtain $n = 1$, $T = D'$ and $Z_D = 1$ in the decompositions given in Theorem \ref{theo:structuredgmoddp} and Remark~\ref{rem:di2notpossible}.
\end{Example}

\subsection{\texorpdfstring{Action of $H$}{Action of H}}\label{sec:actionh}
In this section, we study the action of $H$ on the group $D'$, and in particular, on its subgroup $T$ defined in the previous section. Our aim is to prove part (ii) of Theorem~\ref{theo:uniquesolutiond}. 
Throughout, we use the notation from Theorem~\ref{theo:structuredgmoddp} and Remark~\ref{rem:di2notpossible}. Again, we set $\bar{g} \coloneqq g G'' \in D$ for every $g \in G$ (similarly for subsets of $G$). 
\medskip

The starting point is the following technical lemma:

\begin{lemma}\label{lemma:ficyclic}
For $i \in \{1, \ldots, n\}$, let
$N_i \coloneqq \prod_{j \neq i} T_j \times Z_D \trianglelefteq D$ and consider the preimage $M_i$ of $N_i$ in~$G$. Then the following hold:
\begin{enumerate}[(i)]
\item The centralizer $C_H(M_i)$ is nontrivial. 
\item For every $h \in C_H(M_i) \setminus \{1\}$, we have $[h]_G \in \DCLab$ and $[\bar{h}]_D = T_i \cdot \bar{h}$.
\end{enumerate} 
\end{lemma}

\begin{proof}
$\null$
\begin{enumerate}[(i)]
\item Note that $D' = T_i \times N_i $. As $T_i \neq 1$, we have $N_i^+ \notin (D')^+ \cdot FD$, which implies $N_i^+ \notin \Ann_{ZFD}(\SIDCLab)$ by Theorem~\ref{theo:anndecconjconstantcoeffs}. Hence there exists $C \in \DCLab$ with $N_i^+ \cdot b_{\bar{C}} \neq 0$, where $b_{\bar{C}}$ denotes the basis element of $J(ZFD)$ corresponding to $\bar{C}$ (see Lemma~\ref{lemma:structjzfg}).
If $C \subseteq Z(G')$, then $\bar{C} \subseteq Z_D$. This leads to the contradiction $N_i^+ \cdot b_{\bar{C}} =N_i^+ \cdot (\bar{C}^+ - |\bar{C}| \cdot 1) = |\bar{C}| \cdot N_i^+ - |\bar{C}| \cdot N_i^+ = 0$, as $N_i$ contains~$Z_D$. By Lemma~\ref{lemma:pstrichkonjugation}, we can now assume that $C$ is a $p'$-conjugacy class. Hence we have $C = [h]_G$ for some $h \in H \setminus \{1\}$. Moreover, \cite[Lemma~3.30]{BRE222} yields $h \in C_G(G'')$ and by \cite[Lemma~3.16]{BRE222}, we have $\bar{h} \in C_H(N_i)$. By \cite[Theorem~5.3.2]{GOR68}, we then obtain $h \in C_H(M_i)$, so this group is nontrivial.

\item Let $h \in C_H(M_i) \setminus \{1\}$ and set $C \coloneqq [h]_G$. Then $\bar{C} = U_{D,h} \cdot \bar{h}$ (see Lemma~\ref{lemma:pstrichkonjugation}). Due to $D' = T_i \times N_i$ and since $\bar{h}$ centralizes $N_i$, we obtain $U_{D,h} \subseteq T_i$ (see Remark~\ref{rem:trivia}) as $H$ normalizes $T_i$. On the other hand, since $D'$ is abelian, the map $\alpha \colon T_i \to T_i,\ t \mapsto [t,\bar{h}]$ is a homomorphism of $\F_p H$-modules. Due to $\bar{h} \notin C_H(D')$ (see Remark~\ref{rem:centralizerhdd}), $\alpha$ is an isomorphism by Schur's lemma, which yields $T_i = U_{D,h}$. 
Furthermore, $h \in C_H(M_i) \subseteq C_H(G'')$ implies $C \in \DCLab$ by \cite[Lemma~3.30]{BRE222}. \qedhere
\end{enumerate}
\end{proof}

\begin{Example}
For the group $G = \SL_2(\F_3)$ from Example~\ref{ex:sl}, we have $M_1 = G'' = Z(G)$ and hence $C_H(M_1) = H$ for any Hall $2'$-subgroup $H$ of $G$. 
\end{Example}

The proof of Theorem~\ref{theo:uniquesolutiond}\,(ii) will rely on Theorem~\ref{theo:anndecconjconstantcoeffs}. In order to simplify the calculation of the occurring annihilators, we first improve on Lemma~\ref{lemma:pstrichkonjugation}:

\begin{lemma}\label{lemma:annd}
We have \[\Ann_{ZFD}(\SIDCLab) = \bigcap_{\substack{\bar{C} \in \Cl(D) \setminus \{1\} \\ \bar{C} \subseteq Z_D}} \Ann_{ZFD}(b_{\bar{C}}) \cap \bigcap_{i = 1}^n \Ann_{ZFD}(T_i^+).\] 

\end{lemma}

\begin{proof}
Set \[M \coloneqq \left\{b_{\bar{C}} \colon \bar{C} \in \Cl(D) \setminus \{1\},\, \bar{C} \subseteq Z_D\right\} \cup \{T_1^+, \ldots, T_n^+\},\] 
so we need to show $\Ann_{ZFD}(\SIDCLab) = \Ann_{ZFD}(M)$.
\medskip

First let $y \in \Ann_{ZFD}(M)$. Since $\bar{C} \subseteq Z_D$ for any $C \in \Cl(G)$ with $C \subseteq Z(G')$, we need to show $y \cdot \bar{C}^+ = 0$ for every $p'$-conjugacy class $C \in \DCLab$ (see Lemma~\ref{lemma:pstrichkonjugation}). By Lemma~\ref{lemma:pstrichkonjugation}, we have $\bar{C} = U_{D,h} \cdot \bar{h}$ for some $h \in H$ and $U_{D,h}$ is a nontrivial normal subgroup of $D$ contained in $D'$. It suffices to show $y \cdot U_{D,h}^+ = 0$. If $[h, T_i] \neq 1 $ for some $i \in \{1, \ldots, n\}$, then $T_i \subseteq U_{D,h}$ as $T_i$ is a minimal normal subgroup of $D$. In particular, $U_{D,h}$ is a union of cosets of $T_i$ and hence $y \cdot U_{D,h}^+ = 0$ follows. Assume otherwise that $[h,T_i] = 1$ for every $i \in \{1, \dots, n\}$, that is, $h \in C_H(T)$. Since $h$ acts nontrivially on~$D'$ by Remark~\ref{rem:trivia}\,(iii), this implies $[h,Z_D] \neq 1$ and hence $U_{D,h} \cap Z_D \neq 1$. Then \[(U_{D,h} \cap Z_D)^+ = (U_{D,h} \cap Z_D)^+ - |U_{D,h} \cap Z_D| \cdot 1 = \sum_{\substack{K \in \Cl(D), \\ K \subseteq U_{D,h} \cap Z_D}} K^+ - |K| \cdot 1 \in FM\] is annihilated by $y$ and hence $y \cdot U_{D,h}^+ = 0$ follows as $U_{D,h}$ is a union of cosets of $U_{D,h} \cap Z_D$.
\medskip

Conversely, let $y \in \Ann_{ZFD}(\SIDCLab)$. For $C \in \Cl(G)$ with $C \subseteq Z(G')$ and $C \not \subseteq G''$, we have $C \in \DCLab$ by \eqref{eq:dcl}. In particular, $y$ annihilates $b_{\bar{C}}$ for all $\bar{C} \in \Cl(D) \setminus \{1\}$ with $\bar{C} \subseteq Z_D$. Now let $i \in \{1, \ldots, n\}$. By Lemma~\ref{lemma:ficyclic}, there exists an element $h \in C_H(M_i) \setminus \{1\}$, for which we have $[h]_G\in \DCLab$ as well as $[\bar{h}]_D = T_i \cdot \bar{h}$. The condition $y \cdot [\bar{h}]_D^+ = 0$ then translates to $y \cdot T_i^+ = 0$, which proves $y \in \Ann_{ZFD}(M)$. 
\end{proof}



The following observation plays a crucial role in our arguments: 

\begin{lemma}\label{lemma:claim}
For $i \in \{1, \ldots, n\}$, let $t_i, t_i' \in T_i$ such that either $t_i = t_i' = 1$ or $t_i \neq 1 \neq t_i'$. Then $t \coloneqq t_1 \cdots t_n$ and $t' \coloneqq t_1' \cdots t_n'$ are conjugate in $D$.
\end{lemma}

\begin{proof} We proceed by induction on $m \coloneqq |\{i \in \{1, \ldots, n\} \colon t_i \neq 1\}|.$ We may assume $t_i \neq 1$ for $i \in \{1, \ldots, m\}$ and $t_{m+1} = \ldots = t_n = 1$. For $m = 0$, there is nothing to show. For $m \in \{1, \ldots, n\}$, we consider the subgroup $W \coloneqq T_{m+1} \times \dots \times T_n \times Z_D \trianglelefteq D$. In the following, let $[t]  \coloneqq [t]_D$ and $[t'] \coloneqq [t']_D$. Set $\ell \coloneqq |[t]|$, $\ell' \coloneqq |[t']|$ and $y \coloneqq \left(\ell' [t]^+ - \ell [t']^+\right) \cdot W^+$. Both $\ell$ and $\ell'$ are not divisible by $p$ as $D'$ is abelian (see Remark~\ref{rem:pdividesc}). Using Lemma~\ref{lemma:annd}, we now show $y \in \Ann_{ZFD}(\SIDCLab).$ Note that $(T_1 \times \dots \times T_m) \cap W= 1$. By construction, we have $y \in ZFD.$ Let $\nu_W \colon FD \to F[D/W]$ denote the canonical projection. If $\bar{C} \in \Cl(D) \setminus \{1\}$ is a conjugacy class with $\bar{C} \subseteq Z_D$, then $\bar{C} \subseteq W$ and hence $W^+ \bar{C}^+ = |C| W^+$. Thus $W^+ \cdot b_{\bar{C}} = 0$ and hence $y \cdot b_{\bar{C}} = 0$. 
	By Lemma~\ref{lemma:annd}, it remains to show $y \cdot T_i^+ = 0$ for every $i \in \{1, \ldots, n\}$.
	\medskip
	
	First let $i \in \{1, \ldots, m\}$. By induction, we obtain $C_i \coloneqq [t_1 \cdots t_{i-1} \cdot t_{i+1} \cdots t_m]_D = [t_1' \cdots t_{i-1}' \cdot t_{i+1}' \cdots  t_m']_D$. We have 
	$[t]^+ \cdot T_i^+ = \frac{\ell}{|C_i|} \cdot C_i^+ \cdot T_i^+$ and a similar formula holds for $[t']^+ \cdot T_i^+$. This yields
	$$\left(\ell' [t]^+ - \ell [t']^+\right) \cdot T_i^+ = \ell'\cdot \frac{\ell}{|C_i|} \cdot C_i^+ \cdot T_i^+- \ell \cdot \frac{\ell'}{|C_i|} \cdot C_i^+ \cdot T_i^+ = 0$$ and hence $y \cdot T_i^+ = 0$ follows. Now let $i \in \{m+1, \ldots, n\}$. Since $T_i \subseteq W$, we obtain $\nu_{W}(T_i^+) = 0$ and hence $y \cdot T_i^+ = 0$. This shows $y \in \Ann_{ZFD}(\SIDCLab)$.
If $[t] \neq [t']$, we have $y \notin (D')^+ \cdot FD$, which is a contradiction to Theorem~\ref{theo:anndecconjconstantcoeffs}. Hence $t$ and $t'$ are conjugate in~$D$. 
\end{proof}

\begin{Remark}\label{rem:transitive}
By Lemma~\ref{lemma:claim} and using that $D'$ is abelian, $H$ acts transitively on $T_i \setminus \{1\}$ for $i \in \{1, \dots, n\}$.
\end{Remark}

Now we can prove the second part of Theorem~\ref{theo:uniquesolutiond}:
\begin{theorem}\label{theo:rho}
The canonical map 
\begin{alignat*}{1}
\rho \colon H/C_H(T) &\to  H/C_H(T_1) \times \dots \times H/C_H(T_n) \\
h C_H(T) &\mapsto (h C_H(T_1), \ldots, h C_H(T_n))
\end{alignat*}
 is an isomorphism. 
\end{theorem}

\begin{proof} 
It suffices to show the surjectivity of the canonical projection
\begin{alignat*}{1}
	\rho' \colon H &\to  H/C_H(T_1) \times \dots \times H/C_H(T_n) \\
	h &\mapsto (h C_H(T_1), \ldots, h C_H(T_n))
\end{alignat*}
since the statement then follows from the isomorphism theorem. 
Since $T_i$ is a simple $\F_p H$-module for every $i \in \{1, \ldots, n\}$ and $H$ is abelian, $H/C_H(T_i)$ is cyclic (see \cite[Theorem~3.2.3]{GOR68}). Let $x_i \in H$ such that $ x_i C_H(T_i)$ generates $H / C_H(T_i)$ 
and fix $t_i \in T_i \setminus \{1\}$. By Lemma~\ref{lemma:claim}, $t_1 \cdots t_n$ and $x_1 t_1 x_1^{-1} \cdot t_2 \cdots t_n$ are conjugate in~$D$. Since $D'$ is abelian, there exists an element $h \in H$ with
	$h (t_1 \cdots t_n) h^{-1} = x_1 t_1 x_1^{-1} \cdot t_2 \cdots t_n.$ This implies $h t_1 h^{-1} = x_1 t_1x_1^{-1}$ as well as $h t_i h^{-1} = t_i$ for all $i \in \{2, \ldots, n\}$. For every $j \in \{1, \ldots, n\}$, the nontrivial elements in $T_j$ are conjugate by elements in $H$ by Remark~\ref{rem:transitive}, so $C_H(t_j) = C_H(T_j)$ follows as $H$ is abelian. With this, we obtain
	\[\rho'(h) = (x_1 C_H(T_1), C_H(T_2), \ldots, C_H(T_n)).\] Using a similar argument for all indices, we see that $\rho'$ is surjective and hence $\rho$ is an isomorphism. 
\end{proof}

This completes the proof of Theorem~\ref{theo:uniquesolutiond}. It yields the following convenient representation of $H$: 

\begin{Remark}\label{rem:ei}
Let $i \in \{1, \ldots,n \}$. By Theorem~\ref{theo:rho}, there exists an element $e_i \in H$ centralizing $T_j$ for $j \neq i$ such that $e_i C_H(T_i)$ generates $H/C_H(T_i)$. Moreover, $\langle e_i \rangle$ acts transitively on $T_i \setminus \{1\}$ (see Remark~\ref{rem:transitive}). Observe that $H = \langle e_1, \ldots, e_n, C_H(T)\rangle$. 
\end{Remark}

We conclude this section by applying Theorem~\ref{theo:uniquesolutiond} to the case of Frobenius groups. 

\begin{Example}[Frobenius groups]
	Let $G$ be an arbitrary finite Frobenius group with Frobenius kernel $K$ and Frobenius complement $A$. First suppose that $\soc(ZFG)$ is an ideal of $FG$. Then $G = P \rtimes H$ for $P \in \Syl_p(G)$ and an abelian $p'$-group $H$ (see \cite[Theorem~1]{BRE222}). It is easy to see that $P = K$ and that we may choose $H = A$. In particular, $H$ is cyclic. We have $P = C_P(H) G' = G'$ by \cite[Theorem 5.3.5]{GOR68} and since~$G$ is a Frobenius group. Moreover, $O_{p'}(G)= 1$, so $G$ is basic. Again, we consider the group $D = G/G''$ and write $D' = T_1 \times \dots \times T_n \times Z_D$ with $Z_D = Z(G')/G''$ as in Theorem \ref{theo:uniquesolutiond}. If $n \geq 1$, we have $C_H(G'') \neq 1$ by Lemma~\ref{lemma:ficyclic} (as the subgroup $M_1$ defined therein contains $G''$). This is a contradiction. Hence $n = 0$, so~$G'$ is abelian. Conversely, if $G$ is a Frobenius group with an abelian Frobenius kernel $G' \in \Syl_p(G)$, then $\soc(ZFG) \trianglelefteq FG$ by \cite[Theorem 3]{BRE222}. 
\end{Example}

\section{\texorpdfstring{Classification}{Classification}}\label{sec:generalgroupsfirstcase}
Let $F$ be an algebraically closed field of characteristic $p > 0$ and let $G$ be a finite basic group.
Recall that $G'' \subseteq Z(G')$. In this chapter, we study the case $Z(G')= G''$. This is motivated by the decomposition given in Theorem~\ref{theo:uniquesolutiond}, in which this condition translates to $Z_D = 1$. Our aim is to characterize the groups~$G$ of this form with $\soc(ZFG) \trianglelefteq FG$ (see Theorem~\ref{theo:b}). 
\medskip

In Section~\ref{sec:decomposition}, we show that every finite basic group $G$ with $Z(G) = G'$ for which $\soc(ZFG)$ is an ideal in~$FG$ has a central product decomposition as given in Theorem~\ref{theo:b}. 
In Section~\ref{sec:proofc}, we consider the converse implication.
Combining these results, we prove Theorem~\ref{theo:b} in Section~\ref{sec:prooftheob}.

\subsection{Decomposition}\label{sec:decomposition}
Throughout this subsection, let $G$ be a finite basic group with $Z(G') = G''$ for which $\soc(ZFG)$ is an ideal in $FG$. We show that $G$ admits a central product decomposition as given in Theorem~\ref{theo:b}. To this end, we proceed in two steps. In Section~\ref{sec:qi}, we decompose~$G$ into a central product of basic groups $G_1, \ldots, G_n$ satisfying $Z(G_i) = G_i''$ and $\soc(ZFG_i) \trianglelefteq FG_i$ (for $i = 1, \ldots, n$). 
Additionally, $G_i/G_i''$ is a 1-dimensional affine linear group. 
In Section~\ref{sec:specialcasetsimple}, we study these groups individually and prove that each of them satisfies the conditions given in Theorem~\ref{theo:b}\,(ii).

\subsubsection{Central products}\label{sec:qi}
Recall that $\AGL(1,p^d)$ denotes the 1-dimensional affine linear group over the field with $p^d$ elements. In this section, we prove the following result:

\begin{theorem}\label{theo:qi}
Let $G$ be a finite basic group with $Z(G') = G''$ for which $\soc(ZFG)$ is an ideal in $FG$. 
There exist basic subgroups $G_1, \ldots, G_n$ of $G$ with $G = G_1 * \dots * G_n$ such that, for all $i \in \{1, \ldots, n\}$, the following hold: 
	\begin{enumerate}[(i)]
		\item $\soc(ZFG_i)$ is an ideal in $FG_i$. 
		\item $Z(G_i') = G_i''$.
		\item $G_i/G_i'' \cong \AGL(1,|G_i'/G_i''|)$.
	\end{enumerate} 
\end{theorem}

Note that by Lemma~\ref{lemma:nilpotencyclassofdg}, $G'$ is a special $p$-group (in the sense of \cite[page 183]{GOR68}). The structure of $D \coloneqq G/G''$ can be described as follows:

\begin{Remark}\label{rem:hdirectproduct}\label{rem:eitrivial}
By Theorem \ref{theo:uniquesolutiond}, we have $D' = T_1 \times \dots \times T_n$ for some $n \in \N_0$ and minimal normal subgroups $T_1, \ldots, T_n$ of $D$. By Remark~\ref{rem:centralizerhdd}, we have $C_H(D') = 1$. Hence $ H = \langle e_1 \rangle \times \dots \times \langle e_n \rangle$, where $e_i \in H$ centralizes~$T_j$ for $j \neq i$ and acts regularly on $T_i \setminus \{1\}$ for every $i \in \{1, \ldots, n\}$ (see Theorem~\ref{theo:uniquesolutiond} and Remark~\ref{rem:ei}). In particular, $s_i \coloneqq \ord(e_i) = |T_i|-1$. 
\end{Remark}

This yields the following decomposition of $D$:

\begin{lemma}\label{lemma:structuredagl}
We have $D \cong \AGL(1,|T_1|) \times \dots \times \AGL(1,|T_n|)$.
\end{lemma}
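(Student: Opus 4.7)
\textbf{Proof plan for Lemma~\ref{lemma:structuredagl}.}

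The idea is to exhibit $D$ as an internal direct product of the subgroups $A_i \coloneqq T_i \rtimes \langle e_i \rangle$, and then identify each $A_i$ with $\AGL(1,|T_i|)$. Recall from Remarks~\ref{rem:centralizerhdd} and~\ref{rem:hdirectproduct} that $D' = T_1 \times \ldots \times T_n$, $H = \langle e_1 \rangle \times \ldots \times \langle e_n \rangle$ with $\ord(e_i) = |T_i|-1$, and $\langle e_i \rangle$ acts transitively (hence, by the order count, regularly) on $T_i\setminus\{1\}$ and centralizes $T_j$ for $j\neq i$.

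First, I would verify that $D = A_1 A_2 \cdots A_n$ is a central product. The generation is clear since $D = D' H = (T_1\cdots T_n)(\langle e_1\rangle \cdots \langle e_n\rangle) = A_1\cdots A_n$. For pairwise commuting with $i\neq j$: the groups $T_i,T_j$ commute because $D'$ is abelian, the groups $\langle e_i\rangle,\langle e_j\rangle$ commute because $H$ is abelian, and $\langle e_i\rangle$ centralizes $T_j$ by construction. To upgrade this central product to a direct product, I would show $A_i \cap \prod_{j\neq i} A_j = 1$: take $a$ in this intersection, project to $D/D' \cong H$ to force the $\langle e_i\rangle$-component to lie in $\langle e_j : j\neq i\rangle$, which by Remark~\ref{rem:hdirectproduct} forces it to be trivial; then $a \in T_i \cap \prod_{j\neq i}T_j = 1$ by the direct decomposition of $D'$.

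Next, I would identify $A_i \cong \AGL(1,|T_i|)$. Since $T_i$ is elementary abelian, $|T_i|=p^{d_i}$ for some $d_i$. The group $T_i$ is a faithful irreducible $\F_p\langle e_i\rangle$-module (faithful because $\ord(e_i)=p^{d_i}-1$ equals the orbit size of any nonidentity element under $\langle e_i\rangle$). By Schur's lemma the image of $\F_p\langle e_i\rangle$ in $\End_{\F_p}(T_i)$ is a finite field $K$ containing $\F_p$; faithfulness gives $|K^\times|\geq p^{d_i}-1$, while $\dim_{\F_p} T_i = d_i$ forces $K \cong \F_{p^{d_i}}$ and $T_i$ to be a one-dimensional $K$-vector space. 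Thus $e_i$ acts on $T_i$ by multiplication by a generator of $\F_{p^{d_i}}^\times$, and $A_i = T_i \rtimes \langle e_i\rangle \cong \F_{p^{d_i}} \rtimes \F_{p^{d_i}}^\times = \AGL(1,p^{d_i})$.

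The main obstacle is the second step, identifying each $A_i$ with $\AGL(1,|T_i|)$: everything else is bookkeeping with the direct-product decompositions of $D'$ and $H$ that have already been established. That step, however, is standard representation theory of cyclic groups acting irreducibly and fixed-point-freely on an elementary abelian $p$-group, and once the ring generated by $e_i$ is recognised as $\F_{p^{d_i}}$ via Schur's lemma the identification is immediate.
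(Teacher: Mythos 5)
Your proposal matches the paper's proof in essence: both set $A_i = \langle e_i, T_i\rangle$, show $D = A_1 \times \cdots \times A_n$, and identify each $A_i$ with $\AGL(1,|T_i|)$ via the transitive fixed-point-free action of $\langle e_i\rangle$ on $T_i\setminus\{1\}$. Your version merely fills in two steps the paper leaves terse — you prove triviality of $A_i \cap \prod_{j\neq i} A_j$ by projecting to $D/D' \cong H$, whereas the paper appeals directly to $C_{D'}(H)=1$ and uniqueness of factorizations, and you spell out the Schur's-lemma argument realizing $\F_p\langle e_i\rangle/\mathrm{ker}$ as $\F_{p^{d_i}}$ — but the structure and the key ingredients are the same.
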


\begin{proof}
	For every $i \in \{1, \ldots, n\}$, we set $A_i \coloneqq \langle e_i, T_i \rangle$. Then $D = \langle A_1, \dots, A_n \rangle$. For $i,j \in \{1, \ldots, n\}$ with $i \neq j$, we have $[A_i, A_j] = 1$ (see Remark~\ref{rem:hdirectproduct}) and $A_i \cap A_j \subseteq C_{D'}(H) = 1$ (see Remark~\ref{rem:centralizerhdd}). Moreover, every element of $D$ can be expressed as $a_1 \cdots a_n$ with $a_i \in A_i$ for $i =1, \ldots, n$ in a unique way. This implies $D = A_1 \times \dots \times A_n$. Since $\langle e_i \rangle$ acts regularly on $T_i \setminus \{1\}$, we obtain $A_i \cong \AGL(1, |T_i|)$ for all $i \in \{1, \ldots, n\}$. 
\end{proof}

As before, we set $\bar{g} \coloneqq g G'' \in D$ for all $g \in G$. For $i \in \{1, \ldots, n\}$, let $M_i$ denote the preimage of $N_i = \prod_{j \neq i} T_i$ in~$G$. For $h \in H$, recall that $[h]_G = U_{G,h}h$ with $U_{G,h} = \{[g,h] \colon g \in G\}$. Moreover, recall that $C_H(M_i)$ is nontrivial by Lemma~\ref{lemma:ficyclic}.
For the desired decomposition of $G$, we need the following technical lemma on the structure of the conjugacy classes of elements in $C_H(M_i)$:

\begin{lemma}\label{lemma:realconj}\label{lemma:fiallconjugate}
	Let $i \in \{1, \ldots, n\}$ and fix an element $f \in G'$ with $\bar{f} \in T_i \setminus \{1\}$. Let $h \in C_H(M_i) \setminus \{1\}$ and set $g_i \coloneqq g_{h,i} = [f,h]$. 
	\begin{enumerate}[(i)]
		\item The element $g_i$ has the following properties:
		\begin{enumerate}[(a)]
			\item $U_{G,h} = \{1\} \cup \left\{e_i^k g_{i} e_i^{-k} \colon 0 \leq k \leq s_i-1\right\}$,
		\item $[g_i, e_j] = [g_i, g_j]= 1$ for all $j \in \{1, \ldots, n\}$ with $j\neq i$,
		\item $g_i$ is $G$-conjugate to $g_i^{-1}$ as well as to $[e_i^k, g_i]$ for every $k \in \{1, \ldots, s_i-1\}$.
	\end{enumerate}
		\item For $h, h' \in C_H(M_i) \setminus \{1\},$ the elements $g_{h,i}$ and $g_{h',i}$ are conjugate in $G.$ 
	\end{enumerate}
\end{lemma}

\begin{proof} 
We write $x\sim y$ if $x,y \in G$ are conjugate in $G$. Since $\langle e_i \rangle$ acts transitively on $T_i \setminus \{1\}$, we have 
	\begin{equation}\label{eq:labelcosetsofn}
T_i = \{1\} \cup \left\{e_i^k \bar{f} e_i^{-k} \colon 0 \leq k \leq s_i-1\right\}.
	\end{equation}

	\begin{enumerate}[(i)]	
	\item Set $C \coloneqq [h]_G$. 
			  \begin{enumerate}[(a)]
		\item As $H \subseteq C_G(h)$ holds, $|C|$ is a power of~$p$. By Lemma \ref{lemma:ficyclic} and \eqref{eq:dcl}, this yields $|C| = |\bar{C}| = |T_i|$. For $R \coloneqq \{1\} \cup \left\{e_i^k f e_i^{-k} \colon 0 \leq k \leq s_i-1\right\}$, we have $\bar{R} = T_i$ by \eqref{eq:labelcosetsofn}. Since $h$ acts on $T_i \setminus \{1\}$ without fixed points and centralizes $\overline{M_i} = \prod_{j \neq i} T_j$ as well as $H$, the elements in $\bar{R}$ form a set of representatives for the cosets of $C_D(h)$ in $D$. As $|C| = |\bar{C}|$ holds, $R$ is a set of representatives for the cosets of $C_G(h)$ in~$G$. Using that $e_i$ and $h$ commute, we obtain
		\begin{equation*}
		\begin{aligned}
		U_{G,h} = \{[g,h] \colon g \in R\} = \left\{[e_i^k f e_i^{-k},h] \colon 0 \leq k \leq s_i-1\right\} \cup \{1\} = \left\{e_i^k [f,h] e_i^{-k} \colon 0 \leq k \leq s_i-1\right\}  \cup \{1\}.
		\end{aligned}
		\end{equation*}
		
As $\bar{h}$ acts on $T_i \setminus \{1\}$ without fixed points, we have $[\bar{f},\bar{h}] \neq 1$, which yields $g_i \notin G''$.
		\item Let $j \in \{1, \ldots,n \}$ with $j \neq i$. Due to $e_j \in C_H(T_i)$, we have $e_j g_i e_j^{-1} \in g_i G''$, which yields $e_j g_i h e_j^{-1} = e_j g_i e_j^{-1} h \in [h]_G \cap g_i h G'' = \{g_i h \}.$ Here, we use that every coset of $G''$ contains at most one element of $C$ due to $|C| = |\bar{C}|$ (see (a)). This yields $e_j g_i e_j^{-1} = g_i.$ Similarly, one shows that $[g_i, g_j] = 1$ as $g_j \in M_i$ commutes with $h$.

		\item
		Let $u \in U_{G,h}$ and write $u = [a,h]$ with $a \in G$. Then $U_{G, h} h = C = a C a^{-1} = a U_{G, h} a^{-1} \cdot a ha^{-1}.$
		Since the elements in $U_{G, h} \setminus \{1\}$ are conjugate in $G$ by (a), this also holds for the non\-tri\-vial elements in $a U_{G, h} a^{-1} = U_{G, h} [a,h]^{-1} = U_{G, h} u^{-1}.$ Now choose two distinct nontrivial elements $u, u' \in U_{G, h}$, which is possible due to $|U_{G, h}| = |T_i| \geq 3$ (see Remark~\ref{rem:di2notpossible}). By (a), $u$ and $u'$ are conjugate in~$G$. Using that the nontrivial elements in $U_{G, h} u^{-1}$ and $U_{G, h} u'^{-1}$ are conjugate in~$G$, we obtain $u' u^{-1} \sim u^{-1} \sim u'^{-1} \sim u u'^{-1} = (u' u^{-1})^{-1}$. Hence $[u^{-1}]_G$, so also $[u]_G$, is a real conjugacy class. Setting $u = g_i$ and $u' = e_i^k g_i e_i^{-k}$ for some $k \in \{1, \ldots, s_i-1\}$, we obtain $g_i^{-1} \sim g_i \sim [e_i^k,g_i]$.
			\end{enumerate}
		\item Set $g_i \coloneqq g_{h,i}$. There exists $k \in \Z$ such that $g_i' \coloneqq e_i^k g_{h',i} e_i^{-k}$ is contained in $g_i G''$. Set $c \coloneqq g_i^{-1} g_i' \in G''$. Now write $h C_H(T_i) = e_i^k C_H(T_i)$ for some $k \in \Z$. As both $h$ and $e_i^k$ centralize $T_j$ for $j \neq i$, we have $h e_i^{-k} \in C_H(D') = 1$ (see Remark~\ref{rem:trivia}), so $h = e_i^k$. Since $h$ centralizes $G''$, using part (c) of (i) yields $g_{h',i} \sim g_i'\sim [h, g_i'] = [h, g_i c] = [h, g_i] \sim g_i$. \qedhere
	\end{enumerate}
\end{proof}

With this, we can now prove Theorem~\ref{theo:qi}:

%
%
%
%

\begin{proof}[Proof of Theorem~\ref{theo:qi}]
For every $i \in \{1, \ldots,n\}$, we fix an element $h_i \in C_H(M_i) \setminus \{1\}$ (this is possible by Lemma~\ref{lemma:ficyclic}) and let $g_i \coloneqq g_{h_i,i}$ be defined as in Lemma~\ref{lemma:realconj}. We set $G_i \coloneqq \langle g_i, e_i \rangle$ for all $i \in \{1, \ldots,n\}$. For $i \neq j$, we have $[G_i, G_j] = 1$ by Lemma~\ref{lemma:realconj}. Set $Q \coloneqq \langle G_1, \ldots, G_n \rangle$. Using Theorem~\ref{theo:uniquesolutiond}, it is easily seen that every element in $G'$ can be expressed as $u_1 \cdots u_n  g$ with $u_i \in \{1\} \cup \{e_i^k g_i e_i^{-k} \colon k \in \Z\}$ for $i \in \{1, \ldots, n\}$ and some $g \in G''$. Due to $G'' \subseteq \Phi(G')$, the derived subgroup~$G'$ is generated by the elements $e_i^k g_i e_i^{-k}$ for $i \in \{1, \ldots, n\}$ and $k \in \Z$. Hence $G' \subseteq Q$ follows. This yields $G = G' H \subseteq Q$ and hence $G = G_1 * \dots * G_n$.
\medskip

Let $i \in \{1, \ldots, n\}$. The elements $[e_i, g_i]$ and $g_i$ are conjugate in $G = G_1 * \dots * G_n$ by Lemma~\ref{lemma:realconj}, so also in~$G_i$. As $[e_i, g_i] \in G_i'$ holds, we have $g_i \in G_i'$. In particular, $e_i G_i'$ generates $G_i/G_i'$, so this is a $p'$-group. Hence $G_i'$ is a Sylow $p$-subgroup of $G_i$. By Lemma \ref{lemma:centralproduct}, we have $\soc(ZFG_i) \trianglelefteq FG_i$, which implies $G_i'' \subseteq Z(G_i')$ (see Lemma~\ref{lemma:nilpotencyclassofdg}). We now show the inclusion $Z(G_i') \subseteq G_i''$. Analogously to Remark~\ref{rem:centralizerhdd}\,(ii), one shows $C_{G_i'/G_i''}(e_i) = 1$, which implies $C_{G_i'}(e_i) \subseteq G_i''$. Since $G' = G_1' * \dots * G_n'$ holds, $Z(G_i') \subseteq Z(G') \cap G_i = G'' \cap G_i.$ Any $g \in G''$ can be expressed as $g = q a$ for some $q \in G_i''$ and $a \in \prod_{j \neq i} G_j'' \subseteq C_{G''}(e_i)$. If additionally $g \in G_i$, then $a = q^{-1} g \in C_{G_i'}(e_i)$.
Then $$Z(G_i') \subseteq G'' \cap G_i \subseteq C_{G_i'}(e_i) G_i'' = G_i''.$$
This shows $Z(G_i') =G_i''$. Moreover, $G_i'' = G_i \cap G''$. Setting $\bar{G}_i \coloneqq G_iG''/G''\subseteq D$ as before, this yields $$G_i/G_i'' = G_i/G_i \cap G'' \cong G_i G''/G'' = \bar{G}_i.$$ 
Set $A_i \coloneqq \langle e_i, T_i \rangle$ as in the proof of Lemma~\ref{lemma:structuredagl}. Clearly $\bar{G}_i \subseteq A_i$. On the other hand, $T_i$ is a minimal normal subgroup of $D$ and hence $T_i \subseteq \bar{G}_i$, which shows $\bar{G}_i = A_i$ and $\bar{G}_i' = T_i$. By Lemma~\ref{lemma:structuredagl}, we have $A_i \cong \AGL(1,|T_i|)$. 
\end{proof}


In the next subsection, we prove that each of the groups $G_1, \ldots, G_n$ satisfies the properties (a)-(c) stated in Theorem~\ref{theo:b}\,(ii) with $K_i = G_i'$. The main obstacle is to show that $G_i'$ is a Camina group for every $i \in \{1, \ldots, n\}$. 

\subsubsection{Camina groups}\label{sec:specialcasetsimple}
Throughout this subsection, let $G$ be a finite basic group satisfying $Z(G')= G''$ for which $\soc(ZFG)$ is an ideal in $FG$. Additionally, we assume $G/G'' \cong \AGL(1,|G'/G''|)$. As before, we write $G = G' \rtimes H$ and set $D \coloneqq G/G''$. The aim of this section is to show that $G$ has the properties (a)-(c) described in Theorem~\ref{theo:b}\,(ii) (with respect to the decomposition $G = G' \rtimes H$). To prove Theorem~\ref{theo:b}, these results will be applied to the groups $G_1, \ldots, G_n$ constructed in Theorem~\ref{theo:qi}.  We first observe the following:

\begin{Remark}\label{rem:theobabd}
Since $G$ is basic and $Z(G') = G''$, the group $G'$ has nilpotency class 2. By Remark~\ref{rem:hdirectproduct}, the group $H = \langle e_1\rangle$ is cyclic of order $s \coloneqq |D'|-1$ and permutes the nontrivial cosets of $G''$ in $G'$ transitively. By Lemma~\ref{lemma:ficyclic}, we have $C_H(G'') \neq 1$, so the action of $H$ on $G''$ is not faithful. 
\end{Remark}

Thus in fact, it remains to show the following statement:


\begin{theorem}\label{theo:CaminaDsinglenormal}
Let $G$ be a finite basic group satisfying $Z(G') = G''$ for which $\soc(ZFG)$ is an ideal in~$FG$. Additionally, we assume $G/G'' \cong \AGL(1,|G'/G''|)$. 
Then $G'$ is a Camina group.
\end{theorem}

The proof of this theorem occupies the remainder of this section. 

\begin{Example}\label{ex:slcont}
	Suppose that $F$ has characteristic 2 and let $G \coloneqq \SL_2(\F_3)$. Recall that $\soc(ZFG) \trianglelefteq FG$ (see Example~\ref{ex:sl}). The derived subgroup $G' \cong Q_8$ is a Camina group. In particular, the statement of Theorem~\ref{theo:CaminaDsinglenormal} holds in this case. 
\end{Example}

Recall that we have $C_{G}(G') = Z(G') = G''$ and hence $Z(G) \subseteq G''$ follows. In order to simplify our arguments, we use the following observation on $Z(G)$:

\begin{lemma}\label{lemma:center}
Unless $p = 2$ and $G = \SL_2(\F_3)$, we have $Z(G) = 1$.
\end{lemma}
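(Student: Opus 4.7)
The plan is to assume $Z(G) \neq 1$ and to deduce $G \cong \SL_2(\F_3)$. Since $Z(G) \subseteq C_G(G') = Z(G') = G''$ and $Z(G) = G'' \cap C_G(H)$, the hypothesis $Z(G) \neq 1$ is equivalent to $C_{G''}(H) \neq 1$. Because $H$ is an abelian $p'$-group acting on the $p$-group $G''$, Maschke's theorem yields an $\F_p H$-module decomposition $G'' = Z(G) \oplus [G'',H]$, and in particular an $H$-equivariant projection $\pi \colon G'' \twoheadrightarrow Z(G)$.

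The next step would be to exploit the commutator pairing. Because $G'$ has nilpotency class at most two (Lemma~\ref{lemma:nilpotencyclassofdg}), the map $\beta\colon V \times V \to G''$ defined by $\beta(aG'',bG'') \coloneqq [a,b]$, with $V \coloneqq G'/G''$, is a well-defined $\F_p$-bilinear alternating $H$-equivariant pairing. Composing with $\pi$ gives $\beta' \coloneqq \pi \circ \beta \colon V \times V \to Z(G)$ of the same type, now with trivial $H$-action on the target. By the Camina property (Theorem~\ref{theo:CaminaDsinglenormal}), $\{[a,g] : a \in G'\} = G''$ for every $g \in G' \setminus G''$, so $\beta(v,\cdot)$ is surjective for every $0 \neq v \in V$; projecting, $\beta'(v,\cdot)$ is surjective onto $Z(G)$, and in particular $\beta' \neq 0$.

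The crux of the argument is a module-theoretic computation. By Remark~\ref{rem:hdirectproduct}, $V$ is the simple faithful $\F_p H$-module, which we identify with $\F_{p^d}$ on which $H = \langle e_1 \rangle$ acts as $\F_{p^d}^{\times}$ by multiplication through a generator $\xi$. The $\F_p H$-module $V \otimes_{\F_p} V$ then decomposes as $\bigoplus_{i=0}^{d-1} V_i$, where on each summand $V_i \cong \F_{p^d}$ the element $\xi$ acts as multiplication by $\xi^{1+p^i}$, and the transposition $a \otimes b \mapsto b \otimes a$ interchanges $V_i$ and $V_{d-i}$. Since $\F_p H$ is semisimple (as $p \nmid |H|$) and $Z(G)$ carries the trivial $H$-action, the existence of a nonzero $\beta'$ forces $(\Lambda^2 V)^H \neq 0$, which in turn requires $(p^d - 1) \mid (1 + p^i)$ for some $1 \le i \le d-1$. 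An elementary size estimate based on $p^d - 1 \le 1 + p^{d-1}$ pins this down to $(p,d) = (2,2)$ with $i = 1$. This arithmetic step is the main technical obstacle of the proof.

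The remainder is routine. In the case $(p,d) = (2,2)$ one has $|V| = 4$, $|H| = 3$ and $(\Lambda^2 V)^H \cong \F_2$, so the image of $\beta'$ has size at most two and, by surjectivity, $|Z(G)| = 2$. The Camina conjugacy-class size bound $|G''| \le |V|/p = 2$ then forces $G'' = Z(G)$ of order two, so $G'$ is an extraspecial $2$-group of order $8$ on which $H = C_3$ acts faithfully. Since $\Aut(D_8)$ contains no element of order three, this forces $G' \cong Q_8$, whence $G \cong Q_8 \rtimes C_3 = \SL_2(\F_3)$.
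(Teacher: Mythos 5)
Your proposal takes a genuinely different route from the paper's, replacing the appeal to \cite{ALP66} and the final GAP verification of the orders $112$, $216$, $224$ by a representation-theoretic analysis of $\Lambda^2 V$ for $V \coloneqq G'/G''$. However, as written the argument is circular. You invoke Theorem~\ref{theo:CaminaDsinglenormal} (that $G'$ is a Camina group) twice --- to get surjectivity of $\beta(v,\cdot)$, and again via a ``Camina class-size bound'' to control $|G''|$ --- but in the paper Lemma~\ref{lemma:center} is a preparatory result used \emph{inside} the proof of Theorem~\ref{theo:CaminaDsinglenormal}: that proof begins ``if $G'$ is not Camina, then $G \neq \SL_2(\F_3)$, hence $Z(G)=1$ by Lemma~\ref{lemma:center}.'' So the Camina property is not at your disposal here.

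The good news is that both invocations are dispensable, so the strategy is salvageable and, once repaired, is cleaner than the paper's. Since $G'$ has class at most two and $G''$ is elementary abelian, the commutator induces a well-defined, $H$-equivariant, alternating $\F_p$-bilinear map $\beta \colon V \times V \to G''$, and because $G'' = [G',G']$ is generated by commutators, the induced $\F_p H$-linear map $\Lambda^2 V \to G''$ is \emph{already} surjective without any Camina hypothesis. Composing with the $H$-equivariant Maschke projection $G'' \twoheadrightarrow Z(G)$ yields a nonzero $H$-map from $\Lambda^2 V$ onto the trivial module, so $(\Lambda^2 V)^H \neq 0$ whenever $Z(G)\neq 1$; this is all you used surjectivity of $\beta(v,\cdot)$ for. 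Your eigenvalue/divisibility computation then pins down $(p,d)=(2,2)$, and in that case the same surjection $\Lambda^2 V \twoheadrightarrow G''$ together with $\dim_{\F_2}\Lambda^2 V = 1$ gives $|G''| \leq 2$ directly, again without any Camina bound. With these two substitutions the proof closes and indeed eliminates the computational case check the paper relies on, since already for $|G':G''| \in \{8,9\}$ one finds $(\Lambda^2 V)^H = 0$.
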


\begin{proof}
	The conjugation with $e_1$ induces an automorphism of order $|G' :G''|-1$ on $G'/G''$, and this action is irreducible. Any element in $Z(G)$ is fixed under the conjugation with $e_1$. If $Z(G)$ is nontrivial, this implies $|G': G''| \in \{4,8,9\}$ by \cite[Lemma]{ALP66}. For $|G' : G''| = 4$, we obtain $|H| = 3$ (see Lemma~\ref{lemma:structuredagl}) and $|G''| = 2$. With this, we easily deduce $G \cong \SL_2(\F_3)$. For $|G': G''| = 8$, we obtain $|H| = 7$ and $|G''| \in \{2,4\}$, which yields $|G| \in \{112,224\}$. 
	If $|G': G''| = 9$, we obtain $|H| = 8$ and $|G''| = 3$, so $|G| = 216$. A straightforward verification using GAP \cite{GAP4} shows that the groups of order $112$, $216$ and $224$ do not satisfy the assumptions of this section. 
\end{proof}

Until the end of Section~\ref{sec:specialcasetsimple}, we assume $G \neq \SL_2(\F_3)$, which implies $Z(G) = 1$ (see Example~\ref{ex:slcont} and Lemma~\ref{lemma:center}). 
By Remark~\ref{rem:theobabd}, we may fix $h_1 \in C_H(G'') \setminus \{1\}$ and a corresponding element $g_1  \in G' \setminus G''$ as defined in Lemma~\ref{lemma:realconj}. Set 
\begin{equation}\label{eq:c}
C  \coloneqq\left\{[a,g_1] \colon a \in G'\right\} \subseteq G''. 
\end{equation}

Since $G'$ has nilpotency class 2, $C$ is a subgroup of $G$. 
We collect some observations on the conjugacy classes in $G'$: 
\begin{lemma}\label{lemma:hcyclicintersectionddg}
	$\null$
	\begin{enumerate}[(i)]
		\item We have $C = \langle [e_1^m g_1 e_1^{-m},g_1] \colon m \in \Z \rangle.$ 
		\item For $u \in G''$, we have $[g_1 u ]_{G'} = C g_1 u$. 
		\item For any $x \in G' \setminus G''$, we have $[x]_{G'} = [x]_G \cap x G''$, which yields $[x]_G = \bigcup_{m = 0}^{s-1} e_1^m \cdot [x]_{G'} \cdot e_1^{-m}.$
	\end{enumerate}
\end{lemma}

\begin{proof}
	$\null$
	\begin{enumerate}[(i)]
		\item Since every element in $G'$ can be expressed as $e_1^m g_1 e_1^{-m} d$ with $m \in \Z$ and $d \in G'' = Z(G')$, we have $C = \left\{[a,g_1] \colon a \in G'\right\} =  \langle [e_1^m g_1 e_1^{-m}, g_1] \colon m \in \Z \rangle$.
		\item We have $[g_1 u]_{G'} = \{[a,g_1 u] \colon a \in G' \} g_1 u = \{[a,g_1] \colon a\in G'\} g_1 u= C g_1 u$ due to $u \in  Z(G')$. 
		\item We first prove $[x]_{G'} = [x]_G \cap x G''$. Clearly $[x]_{G'} \subseteq [x]_G \cap x G''$. Now let $k \in G$ with $k x k^{-1} \in x G''$ and write $k = e_1^\ell u$ with $\ell \in \Z$ and $u \in G'$. As $u x u^{-1} \in x G''$ holds, conjugation with $e_1^\ell$ fixes $x G''$. By Remark \ref{rem:eitrivial}, this yields $e_1^\ell = 1$ and hence $k \in G'$, which shows the equality. For the second part, note that the $G$-conjugates of $[x]_{G'}$ are contained in $[x]_G$. On the other hand, we have 
		\[[x]_G \subseteq G' \setminus G'' = \bigcup_{m = 0}^{s-1} e_1^m x e_1^{-m} G''\] 
		and for every $m \in \{0, \ldots, s-1\}$, conjugation with $e_1^m$ induces a bijection between $[x]_{G'} = [x]_G \cap xG''$ and $[x]_G \cap e_1^m x e_1^{-m} G''$. \qedhere
	\end{enumerate}
\end{proof}

For the proof of Theorem~\ref{theo:CaminaDsinglenormal}, we use of the following simplification:

\begin{Corollary}\label{cor:caminac}
	The group $G'$ is a Camina group if and only if $C = G''$. 
\end{Corollary}

\begin{proof}
	Note that $C = G''$ is equivalent to $[g_1]_{G'} = g_1 G''$. The claim now follows from the fact that conjugation with powers of $e_1$ permutes the nontrivial cosets of $G''$ in $G'$ transitively. 
\end{proof}
In the following, we assume that $C$ is a proper subgroup of~$G''$ and construct an element $y \in \soc(ZFG)$ which is not contained in $(G')^+ \cdot FG$. By Lemma~\ref{lemma:socideal}, this is a contradiction, and Theorem~\ref{theo:CaminaDsinglenormal} follows.

\begin{Remark}\label{const:cindp}\label{rem:homomorphismondg}\label{rem:prodconst}
	By Lemma~\ref{lemma:nilpotencyclassofdg}, $G''$ is elementary abelian. In particular, there exists a nontrivial group homomorphism $\alpha \colon G'' \to \F_p$ with $\alpha(C) = 0$. 
	For $g \in G$, we set 
	\[a_g \coloneqq \begin{cases}
	\alpha(u) &\text{ if } g \text{ is conjugate to $g_1u$ in $G$ for some } u \in G'', \\
	0 &\text{ otherwise.} 
	\end{cases}\]
This is well-defined: Let $g_1 u_1$ and $g_1 u_2$ with $u_1, u_2 \in G''$ be conjugate in $G$. By Lemma \ref{lemma:hcyclicintersectionddg}, we have 
$g_1 u_2 \in [g_1 u_1]_G \cap g_1 u_1 G'' = [g_1 u_1]_{G'} = C g_1 u_1 = g_1 u_1 C$,
which yields $u_2 \in u_1 C$ and hence $\alpha(u_1) = \alpha(u_2)$. Since $\alpha$ is a group homomorphism,
$a_{g_1 u_1 u_2} =  a_{g_1 u_1} + a_{g_1 u_2}$
for all $u_1, u_2 \in G''$. For $k \in \Z$, conjugation with $e_1^k$ yields
\[a_{e_1^k g_1 e_1^{-k} u_1 u_2} = a_{e_1^k g_1 e_1^{-k} u_1} + a_{e_1^k g_1 e_1^{-k} u_2}.\]

For $g \in G'',$ conjugation with $e_1$ fixes $t \coloneqq \prod_{g' \in [g]_G} g' \in G'' = Z(G')$, which implies $t \in Z(G) = 1$. By the above, we obtain
\begin{equation*}
	\sum_{g' \in [g]_G} a_{g_1 g'} = a_{g_1 \prod_{g' \in [g]_G} g'} = a_{g_1} = 0. 
\end{equation*}
\end{Remark}

For the remainder of this section, we consider the element $y \coloneqq \sum_{g \in G} a_g g \in FG$ with the coefficients given in Remark~\ref{const:cindp}. Clearly $y \in ZFG$ and $y \notin (G')^+ \cdot FG$ as $\alpha$ is nontrivial. Our aim is to show that $y \in \soc(ZFG)$. We begin with a preparatory observation:

\begin{lemma}\label{lemma:yannihilatess}
For every subgroup $U \subseteq G''$ with $|U| > 2$, we have $y \cdot U^+ = 0.$ 
\end{lemma}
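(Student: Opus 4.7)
The strategy is to verify that every coefficient of $y \cdot U^+$ vanishes. After expanding the product, the coefficient of $h \in G$ in $y \cdot U^+$ equals $\sum_{u \in U} a_{hu^{-1}}$. By the definition of $a_g$, this coefficient is nonzero only for $g$ that is $G$-conjugate to some $g_1 v$ with $v \in G''$, and any such $g$ must lie in $G' \backslash G''$. Since $U \subseteq G''$, the element $hu^{-1}$ shares a coset of $G''$ with $h$, so the coefficient is automatically zero unless $h \in G' \backslash G''$.

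For $h \in G' \backslash G''$, the first step is to invoke Lemma~\ref{lemma:fiallconjugate}\,(v) (applied with $n = 1$) to write $h = e_1^k g_1 e_1^{-k} d$ for some $k \in \Z$ and $d \in G''$. A short manipulation, using that $G''$ is normal in $G$, gives
\[
hu^{-1} = e_1^k\bigl(g_1 \cdot u''\bigr) e_1^{-k}, \qquad u'' \coloneqq e_1^{-k}(d u^{-1}) e_1^k \in G''.
\]
Thus $hu^{-1}$ is $G$-conjugate to $g_1 u''$, which forces $a_{hu^{-1}} = \alpha(u'')$. Since $G''$ is abelian and $\alpha$ is a homomorphism, $\alpha(u'')$ splits as $\alpha(e_1^{-k} d e_1^k) - \alpha(e_1^{-k} u e_1^k)$.

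Summing over $u \in U$ turns the first contribution into $|U| \cdot \alpha(e_1^{-k} d e_1^k)$, which vanishes in $F$ because $|U|$ is a power of $p$ with $|U| > 2$, so $p$ divides $|U|$. The remaining contribution is $\sum_{v \in V} \alpha(v)$, where $V \coloneqq e_1^{-k} U e_1^k$ is a subgroup of $G''$ of the same order as $U$. Since $\alpha|_V \colon V \to \F_p$ is a group homomorphism, its image is either trivial or all of $\F_p$; in the former case the sum is zero, and in the latter it equals $|\ker(\alpha|_V)| \cdot (0+1+\cdots+(p-1))$, which is divisible by $p$ whenever $p$ is odd.

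The delicate point is the edge case $p = 2$: here $0+1+\cdots+(p-1) = 1$, and vanishing requires $|\ker(\alpha|_V)| = |V|/2$ to be even. This is precisely why the hypothesis $|U| > 2$, which forces $|V| \geq 4$ at $p = 2$, appears in the statement; without it, the residual character sum could fail to vanish and the argument would break down.
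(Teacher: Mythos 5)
Your proof is correct and follows the paper's approach quite closely: reduce to coefficients at $w \in G' \backslash G''$, write $w = e_1^k g_1 e_1^{-k}d$, and use the additivity of $\alpha$ to split the sum into a term killed by $p \mid |U|$ and a subgroup sum. The one noticeable difference is in the final evaluation of $\sum_{v \in V}\alpha(v)$: you run a fiber-counting argument over $\Im(\alpha|_V)$ with a case split at $p=2$, whereas the paper instead observes that $\sum_{v \in V}\alpha(v) = \alpha\bigl(\prod_{v \in V}v\bigr) = \alpha(1) = 0$, since an elementary abelian group of order greater than two has trivial product of all its elements. The latter observation handles all primes uniformly and also makes it transparent why $|U|>2$ is exactly the hypothesis needed; your version reaches the same conclusion but via a slightly longer route.
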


\begin{proof}
The coefficient of $w \in G$ in the product $y \cdot U^+$ is given by $\sum_{u \in U} a_{wu^{-1}}$. All summands are zero unless $w \in G' \setminus G''$. Write $w = e_1^k g_1 e_1^{-k}d$ for some $k \in \Z$ and $d \in G''$. Then we have 
	\begin{equation*}
	\begin{aligned}
	\sum_{u\in U} a_{wu^{-1}} = \sum_{u \in U} a_{e_1^k g_1 e_1^{-k} du^{-1}} = |U| \cdot a_{e_1^k g_1 e_1^{-k}d} +  a_{e_1^k g_1 e_1^{-k} \prod_{u \in U}u^{-1}} = a_{e_1^k g_1 e_1^{-k}} =  \alpha(1) = 0.
	\end{aligned}
	\end{equation*}
	
	In the second equality, we used Remark~\ref{rem:homomorphismondg}. For the third, note that $p$ divides $|U|$ and that $\prod_{u \in U} u^{-1} = 1$ since $U$ is elementary abelian of order at least 3. 
\end{proof}

We now show in several steps that $y$ annihilates the basis of $J(ZFG)$ described in Lemma~\ref{lemma:structjzfg}. 

\begin{lemma}\label{lemma:yannihilatesbz}
	For $g \in G' \setminus \{1\}$, we obtain $y \cdot b_{[g]} = 0$. 
\end{lemma}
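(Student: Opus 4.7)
The plan is to compute the coefficient of each $w \in G$ in $y \cdot b_{[g]}$ directly, splitting according to whether $g$ lies in $G''$ or in $G' \setminus G''$.

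If $g \in G''$ then $[g] \subseteq Z(G') = G''$, so $p \nmid |[g]|$ and $b_{[g]} = [g]^+ - |[g]| \cdot 1$. Since the support of $y$ lies in $G' \setminus G''$ and $[g] \subseteq G''$, the coefficient at $w$ is trivially $0$ unless $w \in G' \setminus G''$. For such $w = e_1^k g_1 e_1^{-k} d$, Remark~\ref{rem:homomorphismondg} together with the homomorphism property of $\alpha$ gives $a_{w g'^{-1}} = a_w - \alpha(e_1^{-k} g' e_1^k)$, collapsing the coefficient to $-\alpha\bigl(e_1^{-k} (\prod_{g' \in [g]} g') e_1^k\bigr)$. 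The orbit product $\prod_{g' \in [g]} g'$ is $H$-invariant by construction and centralized by $G'$ (since $G'' \subseteq Z(G')$), so it lies in $Z(G)$. By Remark~\ref{rem:slcont} we are in the situation $G \neq \SL_2(\F_3)$, and Lemma~\ref{lemma:center} gives $Z(G) = 1$, so the product is trivial and the coefficient vanishes.

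If $g \in G' \setminus G''$ then $p \mid |[g]|$ by Remark~\ref{rem:pdividesc}, so $b_{[g]} = [g]^+$. Lemma~\ref{lemma:hcyclicintersectionddg}(iii) together with $[g]_{G'} = g C_g$ yields
\begin{equation*}
[g]^+ = \sum_{m=0}^{s-1} e_1^m \cdot g C_g^+ \cdot e_1^{-m},
\end{equation*}
and centrality of $y$ gives $y \cdot [g]^+ = \sum_{m=0}^{s-1} e_1^m g \cdot (y \cdot C_g^+) \cdot e_1^{-m}$. Since $p \mid |C| = |C_g|$, the generic situation $|C| > 2$ is handled immediately by Lemma~\ref{lemma:yannihilatess}, which gives $y \cdot C_g^+ = 0$.

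The main obstacle will be the residual subcase $|C| = 2$, forcing $p = 2$, in which $y \cdot C_g^+$ need not vanish. I would instead compute the coefficient of $w$ directly as $\sum_{u \in [g^{-1}]} a_{wu}$. For $w \notin G' \setminus G''$ this reduces easily via $p \mid |[g]|$ or $p \mid |C|$ (using a conjugation parametrization of $u$ together with the fact that $G'$ centralizes $G''$). For $w \in G' \setminus G''$, I would parameterize $u$ by its $G''$-coset and an element of the associated $\langle e_1\rangle$-conjugate of $C$; the unique coset with $\bar{w}\bar{u} = 1$ contributes $0$ (as then $wu \in G''$), and each of the remaining $s - 1$ cosets contributes $\alpha$ applied to a specific $\langle e_1\rangle$-conjugate of the nontrivial element $z$ of $C$. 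Identifying $T_1$ with $\F_{2^d}$ on which $\langle e_1\rangle$ acts as $\F_{2^d}^\times$, the indexing of these conjugates is controlled by the bijection $\eta \mapsto \eta/(1+\eta)$ on $\F_{2^d}^\times \setminus \{1\}$, so the total collapses to $\alpha\bigl(\prod_{n=1}^{s-1} e_1^n z e_1^{-n}\bigr)$. Since $\prod_{n=0}^{s-1} e_1^n z e_1^{-n}$ is $\langle e_1\rangle$-invariant and therefore in $Z(G) = 1$, the truncated product equals $z^{-1}$, and $\alpha(z^{-1}) = 0$ by the choice of $\alpha$. This $|C| = 2$ bookkeeping will be the main technical crux.
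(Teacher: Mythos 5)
Your treatment of the case $g \in G''$ matches the paper's: you use Remark~\ref{rem:homomorphismondg} to reduce the coefficient at $w = e_1^k g_1 e_1^{-k} d$ to $\alpha$ applied to a conjugate of the orbit product $\prod_{g' \in [g]} g'$, and this product lies in $Z(G) = 1$; this is exactly the content of Remark~\ref{rem:prodconst}\,(ii). Likewise, for $g \in G' \setminus G''$ with $|C| > 2$, your reduction of $[g]^+$ via Lemma~\ref{lemma:hcyclicintersectionddg}\,(iii) to sums over $\langle e_1\rangle$-conjugates of $g C_g$, followed by Lemma~\ref{lemma:yannihilatess}, is essentially the paper's argument.

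The gap is the case $|C| = 2$. You treat it as a live possibility and propose a complicated, unfinished direct computation (an identification of $T_1$ with $\F_{2^d}$, a bijection $\eta \mapsto \eta/(1+\eta)$, and a truncated-product identity), which you yourself flag as the main unresolved crux. But this case cannot occur under the standing hypothesis of this part of the section: the element $y$ is only defined under the assumption that $C$ is a \emph{proper} subgroup of $G''$ (otherwise no nontrivial $\alpha$ with $\alpha(C) = 0$ exists). If $|C| = 2$, then by Lemma~\ref{lemma:hcyclicintersectionddg}\,(ii) every $G'$-conjugacy class of an element of $G' \setminus G'' = G' \setminus Z(G')$ has size $|C| = 2$, while central elements have class size $1$; hence all conjugacy classes of $G'$ have length at most two. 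By \cite[Proposition 3.1]{ISH99}, $G'$ is then extraspecial, which forces $|G''| = 2 = |C|$, contradicting $C \subsetneq G''$. Once you make this observation, $|C| > 2$ always holds and Lemma~\ref{lemma:yannihilatess} already finishes the argument; the $|C|=2$ bookkeeping is vacuous, and in the form you sketched it is too incomplete to check.
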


\begin{proof}
Set $[g] \coloneqq [g]_G$ and $\ell \coloneqq |[g]|$. First, we assume $g \in G''$.  The coefficient of $t \in G$ in the product $y \cdot b_{[g]}$ is given by
	\[\sum_{g' \in [g]} a_{tg'^{-1}} - \ell \cdot a_{t}.\] 
	Again, we assume $t \in G' \setminus G''$ since all summands are zero otherwise. Then $t$ is conjugate in $G$ to an element $g_1 d$ with $d \in G''$. Since $y \cdot b_{[g]} \in ZFG$ has constant coefficients on the conjugacy classes of $G$, we may assume $t = g_1 d$. 
Using Remark~\ref{rem:homomorphismondg}, we obtain
	\begin{equation*}
	\sum_{g' \in [g]} a_{tg'^{-1}}  = \sum_{g' \in [g]} a_{g_1 d g'^{-1}} \\
	= \ell \cdot a_{g_1 d} + \sum_{g' \in [g^{-1}]} a_{g_1 g'}= \ell \cdot a_{g_1 d} = \ell \cdot a_t.
	\end{equation*}
	
	Now let $g \in G'\setminus G''$. Then $g$ is $G$-conjugate to $g_1 d$ for some $d \in G''$ and we may assume $g = g_1 d.$  
	Note that $C$ is nontrivial due to $g_1 \notin Z(G')$. If $|C| = 2$, then all conjugacy classes in $G'$ are of length at most two (see Lemma~\ref{lemma:hcyclicintersectionddg}). By \cite[Proposition 3.1]{ISH99}, $G'$ is an extraspecial group, which implies $|G''| = 2 = |C|$. This is a contradiction to $C \subsetneq G''$, so $|C| > 2$. By Lemma \ref{lemma:hcyclicintersectionddg}, $[g]$ is a union of cosets of the subgroups $e_1^\ell C e_1^{-\ell}$ with $\ell \in \Z$. Lemma~\ref{lemma:yannihilatess} yields $y \cdot (e_1^\ell C e_1^{-\ell})^+ = 0$ and hence $y$ annihilates $b_{[g]} = [g]^+$.
\end{proof}

Now we show that $y$ annihilates the basis elements $b_C$ of $J(ZFG)$ corresponding to conjugacy classes $C$ which are not contained in $G'.$ We first consider the conjugacy classes of the nontrivial elements in $C_H(G'')$. 

\begin{lemma}\label{lemma:chddg}
	The element $y$ annihilates $[h]_G^+$ for every $h \in C_H(G'') \setminus \{1\}.$
\end{lemma}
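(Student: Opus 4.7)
The plan is to exploit the explicit description $[h] = V h$ from Lemma \ref{lemma:fiallconjugate}(i), where $V \coloneqq \{1\} \cup \{e_1^k g_h e_1^{-k} \colon 0 \leq k \leq s-1\}$. Since the $s+1 = |T_1| = [G':G'']$ elements of $V$ have pairwise distinct images in $D'$, $V$ is a transversal of $G''$ in $G'$; as right multiplication by $h$ is bijective, it suffices to show $y \cdot V^+ = 0$. The coefficient of $w \in G$ in $y V^+$ equals $\sum_{v \in V} a_{wv^{-1}}$ and trivially vanishes for $w \notin G'$ since $y$ is supported on $G'$. Because $y[h]^+ \in ZFG$ is a class function and Lemma \ref{lemma:hcyclicintersectionddg} describes the $G$-conjugacy classes in $G'$, it is enough to verify vanishing on representatives $w \in G''$ and $w = g_1 r$ with $r$ running over a system of $C$-coset representatives in $G''$.

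For $w \in G''$, conjugation by $e_1^{-k}$ turns $w v_k^{-1}$ into $(e_1^{-k} w e_1^k) g_h^{-1}$, and a fixed further conjugation carrying $g_h^{-1}$ to $g_1$ (available by Lemma \ref{lemma:fiallconjugate}(iii)--(iv)), whose $H$-part is some $\eta^* = e_1^{j_*}$, produces $g_1 \cdot e_1^{j_*-k} w e_1^{-(j_*-k)}$, because $G'$ centralizes $G''$. Summing over $k$ and reindexing $\ell = j_* - k$ collapses the coefficient to $\sum_{\ell = 0}^{s-1} \alpha(e_1^\ell w e_1^{-\ell})$. The $\langle e_1 \rangle$-orbit of $w$ coincides with $[w]_G$ (again since $G'$ centralizes $G''$), so the sum is a scalar multiple of $\alpha\bigl(\prod_{w' \in [w]_G} w'\bigr) = \alpha(1) = 0$ by Remark \ref{rem:prodconst} and the homomorphism property of $\alpha$.

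For $w = g_1 r$, I would pass to the $\AGL(1, q)$-model of $D$ from Lemma \ref{lemma:structuredagl} with $q = |T_1|$, identifying $T_1 = \F_q$ and $H = \F_q^\times$ acting by multiplication, and writing $\bar g_1 = a$, $\bar g_h = b$. Then $e_1^{k_*} = a/b$ is the unique $k$ with $\bar v_k = \bar g_1$, and for $k \neq k_*$ the unique $H$-element whose action on $\overline{g_1 v_k^{-1}}$ returns $\bar g_1$ is $\eta_k \coloneqq a/(a - e_1^k b)$, giving a bijection $k \mapsto \eta_k$ from $\{0, \ldots, s-1\} \setminus \{k_*\}$ onto $\F_q^\times \setminus \{1\}$. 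A direct computation using $r \in Z(G')$ yields $g_1 r v_k^{-1} \sim_G g_1 \cdot u_k \cdot \eta_k r \eta_k^{-1}$ for some $u_k \in G''$ independent of $r$, so the coefficient at $g_1 r$ equals
\begin{equation*}
\alpha(r) + \sum_{k \neq k_*} \alpha(u_k) + \sum_{\eta \in \F_q^\times \setminus \{1\}} \alpha(\eta r \eta^{-1}).
\end{equation*}
The first and third terms combine to $\sum_{\eta \in H} \alpha(\eta r \eta^{-1})$, which vanishes by the orbit-sum argument of the previous paragraph, reducing the claim to $\sum_{k \neq k_*} \alpha(u_k) = 0$.

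The principal obstacle is this last identity. I would decompose $u_k = [g_1^{-1}, \eta_k] \cdot (v_k^{\eta_k})^{-1}$ and apply the nilpotency-class-$2$ identity $[g_1^{-1}, \eta_k] = [g_1^{-1}, [\eta_k, g_1]] \cdot [\eta_k, g_1]$, which comes from $g_1^{-1} Y g_1 = [g_1^{-1}, Y] \cdot Y$ with $Y = [\eta_k, g_1] \in G'$. Combined with the class-$2$ identity $[g_1^{-1}, X] = [X, g_1]$ for $X \in G'$, the factor $[g_1^{-1}, [\eta_k, g_1]] = [[\eta_k, g_1], g_1]$ lies in $C = \{[x, g_1] \colon x \in G'\}$, on which $\alpha$ vanishes by the construction of $\alpha$ in Remark \ref{const:cindp}; hence $\alpha(u_k) = \alpha\bigl([\eta_k, g_1] (v_k^{\eta_k})^{-1}\bigr)$. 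The remaining sum $\sum_{\eta \in \F_q^\times \setminus \{1\}} \alpha\bigl([\eta, g_1] (v_k^\eta)^{-1}\bigr)$ should then be evaluated by exploiting the alternating $\F_p$-bilinear commutator map $\beta \colon D' \times D' \to G''$, the identity $\alpha \circ \beta(\cdot, \bar g_1) = 0$, and an involution (e.g.\ $\eta \leftrightarrow \eta^{-1}$) or telescoping on $\F_q^\times \setminus \{1\}$.
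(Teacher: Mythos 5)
Your reduction is sound up to one decisive point, and that point is exactly where you stop proving and start speculating. The case $w \in G''$ is handled correctly (it matches the paper's argument: conjugate each term into the form $g_1 \cdot e_1^{\ell} w e_1^{-\ell}$ and use that the $\langle e_1 \rangle$-orbit product of $w$ lies in $Z(G) = 1$, so $\alpha$ of the orbit sum vanishes). The reduction of the case $w = g_1 r$ to the identity $\sum_{k \neq k_*} \alpha(u_k) = 0$ is also correct, including the bijection $k \mapsto \eta_k$ onto $\F_q^\times \setminus \{1\}$ and the recombination of $\alpha(r)$ with $\sum_{\eta \neq 1} \alpha(\eta r \eta^{-1})$ into a full orbit sum. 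But the identity $\sum_{k \neq k_*} \alpha(u_k) = 0$ is precisely the crux of the lemma, and you do not prove it: you explicitly call it ``the principal obstacle'' and offer only a plan (``should then be evaluated by exploiting \ldots an involution \ldots or telescoping''). The commutator manipulations you sketch only strip off factors of the form $[X,g_1]$ with $X \in G'$, which indeed lie in $C$; what remains involves conjugates of the form $\eta c^{-1} \eta^{-1}$ with $\eta \in H$, and these do \emph{not} lie in $C$ (only $C_x = e_1^k C e_1^{-k}$ varies with the coset), so termwise vanishing fails and no argument for the sum is given. As written, the proof is incomplete.

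The identity is true, but establishing it requires the structural input you never deploy, which is how the paper closes this step. Since $U = \{1\} \cup \{e_1^k g_h e_1^{-k}\}$ is $\langle e_1 \rangle$-invariant and $g_h \sim g_1$ (Lemma~\ref{lemma:realconj}\,(iv)), you may normalize $g_h \in g_1 G''$, and then Lemma~\ref{lemma:hcyclicintersectionddg}\,(iii) gives $g_h = g_1 c$ with $c \in C$. Each element $g_1 v_k^{-1} = [g_h, e_1^k]\, c^{-1}$ is, by Lemma~\ref{lemma:realconj}\,(iii) together with Lemma~\ref{lemma:hcyclicintersectionddg}, of the form $\eta_k^{-1} g_h c_k' \eta_k \cdot c^{-1}$ with $c_k' \in C_{g_h} = C$; conjugating by $\eta_k$ and using $G'' = Z(G')$ yields $\alpha(u_k) = \alpha(c c_k') + \alpha(\eta_k c^{-1} \eta_k^{-1}) = \alpha(\eta_k c^{-1} \eta_k^{-1})$. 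Hence $\sum_{k \neq k_*} \alpha(u_k) = \sum_{\eta \in H} \alpha(\eta c^{-1} \eta^{-1}) - \alpha(c^{-1}) = 0 - 0 = 0$, by the same orbit-product argument as before and $\alpha(C) = 0$. In other words, your bookkeeping (splitting off $r$ instead of $c^{-1}r$ as the paper does) is workable, but the missing step is not a routine bilinear-form computation; it needs the fact that all nontrivial elements of $U$ are $G$-conjugate with discrepancies lying in $C$, i.e.\ Lemma~\ref{lemma:realconj}\,(iii) combined with Lemma~\ref{lemma:hcyclicintersectionddg}, applied after the normalization $g_h \in g_1 G''$.
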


\begin{proof}
	Recall that $[h] \coloneqq [h]_G = U_{G,h} \cdot h$ with $U_{G,h} = \{1\} \cup \left\{e_1^\ell g_1' e_1^{-\ell} \colon 0 \leq \ell \leq s-1\right\}$ for some $g_1' \in G' \setminus G''$ which is conjugate to $g_1$ in $G$ (see Lemma \ref{lemma:fiallconjugate}). By conjugating with a suitable power of $e_1$, we may choose $g_1' \in g_1 G''$. By Lemma~\ref{lemma:hcyclicintersectionddg}, we have $g_1' \in g_1 G'' \cap [g_1] = [g_1]_{G'} = g_1 C$, so we write $g_1' = g_1c$ for some $c \in C$. Note that $y \cdot [h]^+ = 0$ is equivalent to $y \cdot U_{G,h}^+ =0.$ Again, we need to show that
	\begin{equation}\label{eq:sumut}
	\sum_{u \in U_{G,h}} a_{tu^{-1}} = 0
	\end{equation}
 for all $t \in G$, and we may assume $t \in G'$ since all summands are zero otherwise. 
	\medskip
	
	First let $t \in G' \setminus G''$ and write $t = e_1^k g_1 d' e_1^{-k}$ with $k \in \Z$ and $d' \in G''$. Setting $d \coloneqq c^{-1}d' \in G''$ yields $t = e_1^k g_1' d e_1^{-k}$. We have
	\begin{equation}\label{eq:summe1}
	\sum_{u \in U_{G,h}} a_{tu^{-1}} = \sum_{u \in U_{G,h}} a_{e_1^{-k}tu^{-1} e_1^k} = \sum_{u \in U_{G,h}} a_{g_1'd e_1^{-k} u^{-1} e_1^k} = \sum_{u \in U_{G,h}} a_{g_1' u^{-1}d},
	\end{equation}
	since conjugation with $e_1^{-k}$ permutes the set $U_{G,h}$ and $d \in Z(G')$. Furthermore, we obtain
	\begin{alignat*}{1}
	\left\{g_1' u^{-1} d\colon u \in U_{G,h} \right\} &= \left\{g_1' (e_1^\ell g_1'^{-1} e_1^{-\ell}) d \colon 0 \leq \ell \leq s-1\right\} \cup \{g_1'd\}\\
	&= \left\{[g_1', e_1^\ell] d \colon 0 \leq \ell \leq s-1\right\} \cup \{g_1' d\} \eqqcolon X.
	\end{alignat*} 
	For $\ell \in \{1, \ldots, s-1\}$, we consider the element $x \coloneqq [g_1', e_1^\ell]d \in X$. Since $[g_1', e_1^\ell]$ is conjugate to $g_1'$ in $G$ by Lemma~\ref{lemma:realconj}, there exist $d_x \in C_{g_1'}$ and $m_x \in \{0, \ldots, s-1\}$ with $x = e_1^{m_x} g_1' d_x e_1^{-m_x} d$. For $x \coloneqq g_1'd \in X$, we obtain the same form by setting $m_x = 0$ and $d_x = 1$. Note that $C_{g_1'} = C$ because of $g_1' \in g_1 G''$ (see Lemma~\ref{lemma:hcyclicintersectionddg}). Since $U_{G,h}$ is a set of representatives for the cosets of $G''$ in $G'$, this is also true for $X$ since its elements arise from the elements in $U_{G,h}$ by inversion and multiplication with $g_1'd$. In particular, the elements in $X \setminus \{d\}$ form a system of representatives for the nontrivial cosets of $G''$ in $G'$. Hence the correspondence $x \leftrightarrow m_x$ is one-to-one. Setting $v_{m_x} \coloneqq d_x$ for $m_x \in \{0, \ldots, s-1\}$, this yields $x = e_1^{m_x} g_1' v_{m_x} e_1^{-m_x}d$ for all $x \in X \setminus \{d\}$ and hence 
	\[X = \left\{e_1^m g_1' v_m e_1^{-m} d \colon 0 \leq m \leq s-1\right\} \cup \{d\}.\]
	Using $a_d = 0$ and that the coefficients of $y$ are constant under conjugation with $e_1^{-m}$, we obtain the following for the sum given in \eqref{eq:summe1}:
	\[\sum_{u \in U_{G,h}} a_{tu^{-1}} = \sum_{x \in X} a_x = \sum_{m = 0}^{s-1} a_{e_1^m g_1' v_m e_1^{-m}d}  = \sum_{m = 0}^{s-1} a_{g_1' v_m e_1^{-m} d e_1^m} = \sum_{m = 0}^{s-1} a_{g_1 c v_m e_1^{-m} d e_1^m}.\] 
	In the last step, we used $g_1' = g_1 c$. We obtain
	\begin{equation}\label{eq:aux}
	\sum_{m = 0}^{s-1} a_{g_1 c v_m e_1^{-m} d e_1^m} = \sum_{m = 0}^{s-1} \left(a_{g_1 c v_m}  + a_{g_1 e_1^{-m} d e_1^m}\right) = \sum_{m = 0}^{s-1} a_{g_1 e_1^{-m} d e_1^m}= \frac{s}{|[d]|} \sum_{d' \in [d]} a_{g_1 d'} = 0
	\end{equation}
(for $[d] \coloneqq [d]_G$). 
	The first step is due to Remark~\ref{rem:homomorphismondg}. In the second, we use that $c v_m \in C$ for all $m \in \{0, \ldots, s-1\}$.
	The third equality follows since the element $e_1^{-m} d e_1^m$ traverses the conjugacy class $[d] = \{e_1^\ell d e_1^{-\ell} \colon 0 \leq \ell \leq |[d]|-1\}$ exactly $s/|[d]|$ times. In the last step, we apply Remark \ref{rem:prodconst} again.
	\medskip
	
	Now let $t \in G''$. By Lemma \ref{lemma:realconj}\,(iii), there exists $g \in G$ with $g_1'^{-1} = g g_1 g^{-1}$. Using $a_t = 0$ as well as $t \in Z(G')$, we obtain
	\[\sum_{u \in U_{G,h}} a_{tu^{-1}} = a_t + \sum_{i = 0}^{s-1} a_{t e_1^i g_1'^{-1} e_1^{-i}} = \sum_{i = 0}^{s-1} a_{t (e_1^i g) g_1 (e_1^i g)^{-1}} = \sum_{i = 0}^{s-1} a_{g_1 (e_1^i g)^{-1} t (e_1^i g)}.\]
	Due to $t \in Z(G')$, we may assume $g \in H$, so $e_1$ and $g$ commute. Setting $t' \coloneqq g^{-1} t g$, we have
	\[\sum_{i = 0}^{s-1} a_{g_1 (e_1^i g)^{-1} t (e_1^i g)} = \sum_{i = 0}^{s-1} a_{g_1 (g e_1^i )^{-1} t (g e_1^i)} = \sum_{i = 0}^{s-1} a_{g_1 e_1^{-i} t' e_1^i} = a_{g_1 \prod_{i = 0}^{s-1} e_1^{-i} t' e_1^i} = 0\] 
	(using Remark \ref{rem:homomorphismondg} and arguing analogously to the third step in \eqref{eq:aux}). Hence \eqref{eq:sumut} holds for every $t \in G$, which completes the proof.
\end{proof}

Now we gather our results to prove the following statement:

\begin{lemma}\label{lemma:yannihilateslengthp}
	For every $g \in G \setminus G',$ we have $y \cdot [g]_G^+ = 0.$ 
\end{lemma}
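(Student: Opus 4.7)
The plan is to extend the argument of Lemma~\ref{lemma:chddg}, which established $y \cdot [h]^+ = 0$ for nontrivial $h \in C_H(G'')$, to every $g \in G \setminus G'$. Since $G = G' \rtimes H$ is solvable, every $p'$-element of $G$ is $G$-conjugate to an element of $H$; as $y \cdot [g]^+ \in ZFG$ has coefficients constant on conjugacy classes, I may assume $g = ch$ with $h \in H \setminus \{1\}$ and $c \in C_{G'}(h)$, where $c = g_p$ and $h = g_{p'}$ are the commuting $p$- and $p'$-parts.

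I will write $[g]^+ = \tilde{U}^+\, g$ with $\tilde{U} \coloneqq \{[a, g] : a \in G\} \subseteq G'$, reducing the claim to $y \cdot \tilde{U}^+ = 0$. Analysing the image $\overline{\tilde U}$ in $D = G/G''$, the commutator identity combined with the abelianness of $D'$ gives, for $\bar a = \bar b \bar k$ with $\bar b \in D'$ and $\bar k \in H$,
\[
  [\bar a, \bar g] \;=\; [\bar k, \bar c]\,[\bar b, \bar h].
\]
Since $D' = T_1$ is a simple $\F_p H$-module and $\bar h$ acts nontrivially on it (by Remark~\ref{rem:eitrivial} the cyclic group $\langle e_1 \rangle$ acts without fixed points on $T_1 \setminus \{1\}$), Schur's lemma forces $\bar b \mapsto [\bar b, \bar h]$ to be an automorphism of $D'$. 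Consequently $\overline{\tilde U} = D'$, so $\tilde U$ meets every coset of $G''$ in $G'$.

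The core step is then to verify that $\sum_{u \in \tilde U} a_{s u^{-1}}$, the coefficient of a fixed $s \in G'$ in $y \cdot \tilde{U}^+$, vanishes (coefficients outside $G'$ are automatically zero since the support of $y$ lies in $G'$). Parameterising $u = [bk, g] = b(kck^{-1})(hbh^{-1})^{-1} c^{-1}$ and summing first over $\bar b \in D'$ for fixed $\bar k \in H$ (which by the Schur argument above exhausts $D'$ exactly once), I decompose the resulting sum along cosets of $G''$ in $G'$ and evaluate each coset-sum using: (i) the additivity $a_{g_1 z_1 z_2} = a_{g_1 z_1} + a_{g_1 z_2}$ from Remark~\ref{rem:homomorphismondg}, (ii) the conjugation-invariance $a_{e_1^{-k} x e_1^k} = a_x$ of the coefficients of $y$, and (iii) the vanishing identity $\sum_{g' \in [z]} a_{g_1 g'} = 0$ from Remark~\ref{rem:prodconst}. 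Combined with $\alpha(C) = 0$, these rewrites should collapse every contribution to zero, in the same spirit as the computation in Lemma~\ref{lemma:chddg}.

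The main obstacle will be the bookkeeping. When $h \notin C_H(G'')$, the fibres of the surjection $\tilde U \to D'$ are non-trivially twisted by the map $G'' \to G''$, $z \mapsto [z, h]$, so the reduction to $\alpha$-sums over cosets of $C$ is more delicate than in Lemma~\ref{lemma:chddg}, where the corresponding fibre was a single point. I expect to handle this by decomposing $G'' = C_{G''}(h) \times [G'', h]$ as $\F_p\langle h\rangle$-modules (possible by Maschke's theorem, as $h$ has $p'$-order): on the first factor the argument proceeds exactly as in Lemma~\ref{lemma:chddg}, while on the second the relevant sums over cosets are annihilated by $y$ via Lemma~\ref{lemma:yannihilatess} whenever this factor has order greater than two, with the small-order corner case handled by a direct appeal to Remark~\ref{rem:prodconst}.
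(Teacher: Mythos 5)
Your plan heads in a workable direction, but it stops short of a proof at exactly the places where the work lies, and it misses two simplifications that the paper's argument hinges on. First, after reducing to $g = ch$ with $h \in H\setminus\{1\}$ and $c \in C_{G'}(h) \subseteq G''$, the paper does not attack $\tilde U = \{[a,ch] : a \in G\}$ directly: it observes that $[ch] = [h]\cdot[c]$ (uniqueness of $p$/$p'$-parts gives a bijection $[h]\times[c]\to[ch]$), so $[ch]^+ = [h]^+[c]^+$ in $ZFG$ and it suffices to prove $y\cdot[h]^+ = 0$. Your parametrization $u = [b,h][k,c]$ carries an extra twist by $\{[k,c]:k\in H\}$ that this reduction simply removes; without it, the fibrewise bookkeeping you describe over cosets of $G''$ becomes substantially harder, and you have not actually carried it out — ``these rewrites should collapse every contribution to zero'' is a hope, not an argument.

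Second, and more seriously, your treatment of the factor $[G'',h]$ has a hole. Once $g = h$ is a pure $p'$-element with $h \notin C_G(G'')$, the paper shows that $U = \{[a,h]:a\in G'\}$ is a disjoint union of cosets of $N := U \cap G'' = [G'',h]$, that $N$ is a nontrivial normal subgroup of $G$, and — crucially — that $|N| = 2$ is impossible because a normal subgroup of order two would be central, contradicting $Z(G) = 1$ from Lemma~\ref{lemma:center}. Then Lemma~\ref{lemma:yannihilatess} gives $y\cdot N^+ = 0$ and hence $y\cdot[h]^+ = 0$ immediately, with no further coefficient computation. Your proposal defers the case $|[G'',h]| = 2$ to ``a direct appeal to Remark~\ref{rem:prodconst}'', but that remark does not give what you need there: the correct move is to rule the case out via $Z(G) = 1$, which you never invoke. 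On the sub-case $h \in C_G(G'')$ your plan and the paper agree (both quote Lemma~\ref{lemma:chddg}); it is the opposite sub-case where your sketch needs to be replaced by the normal-subgroup argument above.
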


\begin{proof}
As before, we set $\bar{g} \coloneqq gG'' \in D$. Let $k \in \Z$ with $[g]_G \subseteq e_1^k G'$.
Since $D \cong \AGL(1,|D'|)$ by assumption, we have $[\bar{g}]_D = \bar{g} D'$.
In particular, $[g]_G$ contains an element~$e_1^k d$ with $d \in G''$ and we may assume $g = e_1^k d$. A set of representatives for the cosets of $C_G(e_1^k)$ in $G$ can be chosen in $G'$ since $e_1^k$ centralizes $H$. Similarly, a set of representatives for the cosets of $C_G(d)$ in $G$ can be found in $H = \langle e_1 \rangle$. Since $d$ and $e_1^k$ centralize $G'$ and $\langle e_1 \rangle$, respectively, we have $[e_1^k d]_G = [e_1^k]_G \cdot [d]_G$. Moreover, $G'$ acts on $[e_1^k d]_G$ by conjugation with orbits of the form $[e_1^k]_G d'$ with $d' \in [d]_G$. In particular, $[e_1^kd]_G^+$ is a multiple of $[e_1^k]_G^+$ in $FG$, so it suffices to prove $y \cdot [e_1^k]_G^+ = 0$. 
	\medskip
	
If $e_1^k \in C_G(G'')$, the claim follows by Lemma~\ref{lemma:chddg}. Now let $h \coloneqq e_1^k \notin C_G(G'')$. As $h$ acts on $D' \setminus \{1\}$ without fixed points (see Remark~\ref{rem:eitrivial}), we deduce that $[g, h] \in G''$ for some $g \in G'$ is equivalent to $g \in G''$. Moreover, $[a_1 a_2, h] = [a_1, h] \cdot [a_2, h]$ for all $a_1, a_2 \in G''$ since $G''$ centralizes $G'$. Hence $N \coloneqq U_{G,h} \cap G''$ is a normal subgroup of $G$. Since $h$ acts nontrivially on $G''$, we have $N \neq 1$. Now consider an orbit $B$ of the conjugation action of $G''$ on $[h]_G$ and let $b \in B$. We claim that $B = Nb$. To see this, write $b = ghg^{-1}$ for some $g \in G'$. For every $n \in N$, there exists some $d \in G''$ with $[d,h] = n$ by the above. Since $[d,b] = [d,ghg^{-1}] = [d,h] = n$ follows from the fact that $g$ centralizes $G''$, we have $nb = dbd^{-1} \in B$. On the other hand, we have $[d,h] \in N$ for $d \in G''$, which implies $B \subseteq Nb$. This shows that $[h]_G$, and hence $U_{G,h}$, is a disjoint union of cosets of~$N$. If $|N| = 2$, we have $N \subseteq Z(G) = 1$, which is a contradiction. Hence $|N|> 2$ and Lemma~\ref{lemma:yannihilatess} yields $y \cdot N^+ = 0$, which implies $y \cdot [h]_G^+ = 0$.
\end{proof}

With these preliminary results, we now prove Theorem \ref{theo:CaminaDsinglenormal}:

\begin{proof}[Proof of Theorem \ref{theo:CaminaDsinglenormal}]
Suppose that $G'$ is not a Camina group. In particular, this implies $G \neq \SL_2(\F_3)$ (see Example~\ref{ex:slcont}), so $Z(G) = 1$ (see Lemma~\ref{lemma:center}). By Corollary~\ref{cor:caminac}, the group $C$ defined in~\eqref{eq:c} is a proper subgroup of $G''$. Consider the element $y = \sum_{g \in G} a_g g$ with the coefficients described in Remark~\ref{const:cindp}. Clearly, we have $y \in ZFG$. By Lemmas~\ref{lemma:yannihilatesbz} and~\ref{lemma:yannihilateslengthp}, $y$ annihilates the basis of $J(ZFG)$ given in Lemma~\ref{lemma:structjzfg}, which implies $y \in \soc(ZFG)$. By Lemma~\ref{lemma:socideal}, this is a contradiction to $\soc(ZFG) \trianglelefteq FG$ due to $y \notin (G')^+ \cdot FG$. Hence $G'$ is a Camina group. 
\end{proof}

\subsection{Converse implication}\label{sec:proofc}
As a preparation for the proof that (ii) implies (i) in Theorem~\ref{theo:b}, we prove the following result:

\begin{theorem}\label{theo:rueck}
Let $G$ be a finite basic group with $Z(G') = G''$ such that the following hold:
%
%
	
	\begin{enumerate}[(i)]
		\item $G/G'' \cong \AGL(1,|G'/G''|)$, 
		\item $G'$ is a Camina group, and
		\item $C_G(G'') \not \subseteq G'$.
	\end{enumerate}
Then $\soc(ZFG)$ is an ideal in $FG$. 
\end{theorem}

\begin{proof}
As before, we write $G = G' \rtimes H$ for a Hall $p'$-subgroup $H$, $D = G/G''$ and set $\bar{g} \coloneqq g G'' \in D$ for every $g \in G$. Due to $D \cong \AGL(1, |D'|)$, $H$ is cyclic and for every $h \in H \setminus \{1\}$, we have $[\bar{h}]_D = \bar{h}D'$. Due to $C_G(G'') \not \subseteq G'' = Z(G')$, there exists a non-trivial element $r \in C_H(G'')$. Since $r$ centralizes $G''$, we have $|[r]_G| = |[\bar{r}]_D| = |D'|$ (see \cite[Lemma 3.30]{BRE222}). Note that the elements in $D' \setminus \{1\}$ are $D$-conjugate by (i). With this, one can show analogously to the proof of Lemma \ref{lemma:realconj} that the elements in $U_{G,r} \setminus \{1\}$ are conjugate in~$G$.
\medskip

By \cite[Remark 3.15]{BRE222}, we have $\soc(ZFG) = \bigoplus_{h \in H} S_h$ for $S_h \coloneqq \soc(ZFG) \cap FhG'$. As in Example~\ref{ex:sl}, it suffices to show $\dim_F S_h = 1$ for all $h \in H$. First consider an element $y = \sum_{g \in G} a_g g \in S_1$. 
Then $y \cdot [r]_G^+ = 0$ and hence $y \cdot U_{G,r}^+ = 0$. The coefficient of $1$ in the latter product is given by $a_1 + \sum_{u' \in U_{G,r} \setminus \{1\}}a_{u'^{-1}}$. Since all elements in $U_{G,r} \setminus \{1\}$ are conjugate in $G$, we obtain 
\[0 = a_1 + \sum_{u' \in U_{G,r} \setminus \{1\}}a_{u'^{-1}} = a_1 + (|U_{G,r}|-1) a_{u^{-1}} = a_1 + (|D'|-1) a_{u^{-1}}\] 
for every $u \in U_{G,r}$, which implies $a_1 = a_{u^{-1}}$. Now let $g \in G' \setminus G''$. Then we find $u \in U_{G,r}$ with $u \in gG''$. By~(ii), $g$ and $u$ are conjugate in $G$ and hence $a_g = a_u = a_1$ follows. For $g \in G'' \setminus \{1\}$, considering the coefficient of $1$ in the product $y \cdot b_{[g]} = 0$ as above yields $a_{g^{-1}} = a_1$. 
This shows $y \in F(G')^+$ and hence $\dim_F S_1 = 1$. Now let $y = \sum_{g \in G} a_g g \in S_h$ for $h \in H \setminus \{1\}$. Consider a conjugacy class $K \in \Cl(G)$ with $K \subseteq h G'$. Due to $[\bar{h}]_D = \bar{h}D'$, the class $K$ contains an element $h t$ with $t \in G''$. Let $[t] \coloneqq [t]_G$. If $t \neq 1$, the coefficient of~$h$ in the product $y \cdot b_{[t^{-1}]} = 0$ is given by 
\[0 = \sum_{t' \in [t]} a_{ht'} - |[t]| \cdot a_h.\] 
Note that $[t] = \{btb^{-1} \colon b \in H\}$ due to $t \in Z(G')$. Since $H$ is abelian, this yields $h[t] \subseteq [ht] = K$, so the summands $a_{ht'}$ for $t' \in [t]$ are all equal. Again, this implies $a_h = a_{ht}$. This shows $y \in F(hG')^+$ and hence $\dim_F S_h = 1$ follows. As in Example~\ref{ex:sl}, we conclude that $\soc(ZFG)$ is an ideal in $FG$.
\end{proof}

\begin{Example}\label{ex:extraspecial}
$\null$
\begin{enumerate}[(i)]	
\item For $p > 2$, let $P$ be the extraspecial group of order $p^{3}$ and exponent $p$. Then there exists an automorphism $\alpha \in \Aut(P)$ of order $p^2-1$ that permutes the nontrivial cosets of $P'$ in $P$ transitively (see~\cite[Theorem]{WIN72}). The group $G \coloneqq P \rtimes \langle \alpha \rangle$ satisfies the assumptions of Theorem~\ref{theo:rueck}. In particular, $\soc(ZFG)$ is an ideal in $FG$. 
\medskip

For $p = 3$, this construction yields the group $ \texttt{SmallGroup}(216,86)$ in GAP, which was studied in \cite[Example~3.26]{BRE222}. For $p = 2$, an analogous construction extending the extraspecial group $Q_8$ by a cyclic group of order 3 leads to the group $\SL_2(\F_3)$ studied in Example~\ref{ex:sl}. 
\item Let $p \in  \mathbb{P}$ be a  prime number and let $q$ be a power of $p$ such that $q+1$ is not divisible by $3$. Let~$P$ be a Sylow $p$-subgroup of $\operatorname{PSU}(3,q^2)$ and let $G$ be its normalizer in $\operatorname{PSU}(3,q^2)$. By \cite[Theorem~II.10.12]{HUP67}, $G = P \rtimes H$ with a cyclic group $H$ of order $q^2-1$. Moreover, $|P| = q^3$ and $P' = Z(P)$ is cyclic of order $q$. In particular, $P$ is a Camina group. Moreover, it is easily verified that $G/P' \cong \AGL(1,|P/P'|)$ and that $C_G(P') \not \subseteq P'$.  
Hence $G$ satisfies the assumptions of Theorem~\ref{theo:rueck} and thus $\soc(ZFG) \trianglelefteq FG$. In view of (i), we point out that in this case $P$ is not necessarily extraspecial. 
\end{enumerate}
\end{Example}

\subsection{Proof of Theorem~\ref{theo:b}}\label{sec:prooftheob}

With the results of the preceding subsections, we now prove Theorem~\ref{theo:b}:

\begin{proof}[Proof of Theorem~\ref{theo:b}]
Let $G$ be a finite basic group with $Z(G') = G''$. 
\medskip

First assume $\soc(ZFG) \trianglelefteq FG$. Then $G = G_1 * \dots * G_n$ for the subgroups $G_1, \ldots, G_n$ defined in Theorem~\ref{theo:qi}. Applying Remark~\ref{rem:theobabd} and Theorem~\ref{theo:CaminaDsinglenormal} to each of the groups $G_1, \ldots, G_n$ yields the statement of (ii) with $K_i = G_i'$.
\medskip

Conversely, assume (ii) and consider a factor $G_i = K_i \rtimes \langle \varphi_i \rangle$ of the central product. The assumptions (a) and (b) imply $G_i' = K_i$ and $G_i/G_i'' \cong \AGL(1,|K_i/K_i'|) =  \AGL(1,|G_i/G_i''|)$. Applying Theorem~\ref{theo:rueck} to~$G_i$ yields $\soc(ZFG_i) \trianglelefteq FG_i$. Then $\soc(ZFG) \trianglelefteq FG$ follows by Lemma~\ref{lemma:centralproduct}.
\end{proof}

For $p>2$, we obtain the following simplification of Theorem~\ref{theo:b}:
 
\begin{Remark}
Assume $p > 2$ and let $M \coloneqq K \rtimes \langle \varphi \rangle$ be a finite group with $K$ and $\varphi$ satisfying the properties (a) and (b) of Theorem~\ref{theo:b}\,(ii). Let $\alpha$ denote the unique involution in $\langle \varphi \rangle$. Then $\alpha$ acts on the abelian group $K/K'$ by inversion. For $k_1, k_2 \in K \setminus K'$, write $\alpha(k_i) = k_i^{-1} k_i'$ for some $k_i' \in K'$ ($i = 1,2$). Then $\alpha([k_1,k_2]) = [k_1^{-1} k_1', k_2^{-1}k_2'] = [k_1^{-1}, k_2^{-1}] = [k_1,k_2]$ since $K' = Z(K)$ by property (a). In particular, $\alpha$ acts trivially on $K'$ and hence $\langle \varphi \rangle$ does not act faithfully on $K'$.
\end{Remark} 

This shows that condition (c) in Theorem~\ref{theo:b}\,(ii) can be omitted if $p$ is odd.
 
\subsection*{Acknowledgment}
The results in this paper are part of my PhD thesis \cite{BRE22}, which was supervised by Burkhard K\"ulshammer. I would like to thank him for his advice as well as numerous valuable remarks concerning this project. I am also grateful to Lászlo Héthelyi and Magdolna Szöke for providing me with Example~\ref{ex:extraspecial}\,(ii). Moreover, I would like to thank Pascal Schweitzer for helpful comments on the introduction of this paper. Finally, I thank the anonymous reviewer for their detailed feedback.

\subsection*{Declaration of interests}

The author reports that there are no competing interests to declare.

\bibliographystyle{plain}
\bibliography{finitegroups.bib}

\end{document}